\pgfplotsset{compat=newest}
\def\RR{\mathbb{R}}
\def\NN{\mathbb{N}}
\def\eps{\varepsilon}
\renewcommand{\geq}{\geqslant}
\renewcommand{\leq}{\leqslant}
\def\herd{\mathrm{herd}}
\def\lock{\mathrm{lock}}
\def\equi{\mathrm{equi}}
\def\cC{{\mathcal C}}
\def\cR{{\mathcal R}}
\def\cU{{\mathcal U}}
\definecolor{mygreen}{rgb}{0,0.7,0}
\definecolor{myblue}{rgb}{0,0,1}
\definecolor{myred}{rgb}{1,0,0}
\newtheorem{theorem}{Theorem}
\newtheorem{proposition}{Proposition}
\newtheorem{lemma}{Lemma}
\theoremstyle{definition}\newtheorem{remark}{Remark}
\begin{document}


\title{How Best Can Finite-Time Social Distancing Reduce Epidemic Final Size?}

\author{Pierre-Alexandre Bliman\footnote{Corresponding author} \footnote{Inria, Sorbonne Universit\'e, Universit\'e Paris-Diderot SPC, CNRS, Laboratoire Jacques-Louis Lions, \'equipe MAMBA, Paris, France. 
\texttt{Pierre-Alexandre.bliman@inria.fr}}
and Michel Duprez\footnote{Inria, Universit\'e de Strasbourg,  ICUBE, \'equipe MIMESIS, Strasbourg, France.
\texttt{michel.duprez@inria.fr}}}



\maketitle

\begin{abstract}
Given maximal social distancing duration and intensity, how can one minimize the epidemic final size, or equivalently the total number of individuals infected during the outbreak?
A complete answer to this question is provided and demonstrated here for the SIR epidemic model.
In this simplified setting, the optimal solution consists in enforcing the highest confinement level during the longest allowed period, beginning at a time instant that is the unique solution to certain 1D optimization problem.
Based on this result, we present numerical results showing the best possible performance for a large set of basic reproduction numbers and lockdown durations and intensities.
\end{abstract}






\section{Introduction}

The current outbreak of Covid-19 and the entailed implementation of social distancing on an unprecedented scale, leads to a renewed interest in modelling and analysis of the non-pharmaceutical intervention strategies to control infectious diseases.
In contrast to the remotion of susceptible individuals (by vaccination) or infectious individuals (by isolation or quarantine) from the process of disease transmission, the term ``social distancing" refers to attempts to directly reduce the infecting contacts within the population.
Such actions
may be obtained through voluntary actions, possibly fostered by government information campaigns, or by mandatory measures such as partial or total lockdown.
Notice that, when no vaccine or therapy is available, such containment strategies constitute probably the only mid-term option.

Optimal control approaches have been abundantly explored in the past in the framework of control of transmissible diseases, see e.g.\ \cite{Lenhart:2007aa,Sharomi:2017aa} and bibliographical references in \cite{Bliman:2020aa}.
Optimal control of social distancing (possibly coupled with vaccination, treatment or isolation) is usually considered through the minimization of a finite-time integral cost linear in the state, and quadratic in the input control variables or jointly bilinear in the two signals \cite{Behncke:2000aa,Yan:2008aa,Lee:2010aa,Lin:2010aa,Alvarez:2020aa,Djidjou-Demasse:2020aa}.
The authors of \cite{Morris:2020aa} study the optimal control allowing to minimize the maximal value taken by the infected population.
The integral of the deviation between the natural infection rate and its effective value due to confinement is used as a cost in \cite{Miclo:2020aa}, together with constraints on the maximal number of infected.
Optimal public health interventions as a complement to vaccination campaigns have been studied in \cite{Buonomo:2019aa,Buonomo:2019ab}; see also \cite{Manfredi:2013aa} for more material on behavioral epidemiology.

The magnitude of the outbreak, usually called the epidemic {\em final size}, is another important characteristic.
It is defined as the total number of initially susceptible individuals that become infected during the course of the epidemic.
Abundant literature exists concerning this quantity, since Kermack and Mc Kendrick's paper from 1927 \cite{Kermack:1927aa}; see \cite{Ma:2006aa,Andreasen:2011aa,Katriel:2012aa,Miller:2012aa} for important contributions to its computation in various deterministic settings.
Recently, optimal control approach has been introduced to minimize the final size by temporary reduction of the contact rate on a given time interval $[0,D]$, $D>0$.
This issue has been considered in \cite{Ketcheson:2020aa}, with total lockdown and added integral term accounting for control cost; and in \cite{Bliman:2020aa}, where partial lockdown is considered as well.
The corresponding optimal control is bang-bang, with maximal distancing intensity applied on a subinterval $[T_0^*,D]$, for some unique $T_0^*\in [0,D)$ depending of the initial conditions, and no action otherwise. 

In a population in which a large proportion of individuals is immune, either after vaccination or after having been infected, the infection is more likely to be disrupted.
The {\em herd immunity} threshold is attained when the number of infected individuals begins to decrease over time.
While the epidemic final size is {\em always} smaller than this value, a significant proportion of susceptible individuals may still be infected until the epidemic is over.
In this perspective, minimizing the epidemic final size can be seen as an attempt to stop the outbreak as close as possible after passing the herd immunity threshold.

While distancing enforcement cannot last for a long time, there is indeed  no reason in practice why it should be restricted to start at a given date ---typically ``right now".
Elaborating on \cite{Bliman:2020aa}, we consider in the present paper a more general optimal control problem, achieved through social distancing during a given maximal time duration $D>0$, but without prescribing the onset of this measure.
A key result below (Theorem \ref{th:T}) shows the existence of a unique time $T^*$, which depends upon the initial conditions, for which the optimal control corresponds to applying maximal distancing intensity on the interval $[T^*,T^*+D]$: this more natural setting yields a more efficient control strategy.

The paper is organized as follows.
We introduce in Section \ref{se1} the precise setting of the problem under study and formulate the three main results: Theorem \ref{th:T} demonstrates the existence and uniqueness of the optimal policy and provides a constructive characterization; Theorem \ref{th2} studies its dependence upon the lockdown intensity and duration; Theorem \ref{th3} shows that above a certain critical lockdown intensity, optimal social distancing on a sufficiently long period approaches herd immunity arbitrarily close.
{ Section \ref{se3}
provides extensive numerical essays.}
The proof of Theorem \ref{th:T} is  the subject of Section \ref{se2}.
Concluding remarks are given in Section \ref{se5}.
{ Comments on the numerical implementation of the optimum search method and the related algorithms are provided in \ref{se6}.}

\section{Problem description and main results}
\label{se1}

Consider the system
\begin{equation}\label{SIR}
\begin{array}{ll}
\vspace{.1cm}
\dot S(t) = -u(t)\beta S(t)I(t), & t\geqslant 0\\
\dot I(t) = u(t) \beta S(t)I(t) - \gamma I(t), &t\geqslant 0
\end{array}
\end{equation}
complemented with nonnegative initial data $S(0)=S_0$, $I(0)=I_0$ such that $S_0+I_0 \leq 1$.
The input $u$, taking on values in $[0,1]$, models the effect of a social distancing policy: $u(t)=1$ corresponds to absence of restrictions, while $u(t)=0$, corresponding to complete lockdown, prohibits any contact and thus any transmission.
In the sequel, we call {\em uncontrolled system} the system corresponding to $u\equiv 1$, and generally speaking restrict $u\in L^\infty(0,+\infty)$ to be such that $\alpha \leq u(t) \leq 1$ for a given constant $\alpha\in [0,1)$ and for almost any $t\geq 0$.
The constant $\alpha$, called here the {\em maximal lockdown intensity}\footnote{Therefore, a {\em smaller} value of the maximal lockdown intensity $\alpha$ may produce {\em more intense} lockdown.} determines the most intense achievable social distancing.

We assume in all the sequel that the basic reproduction number $\cR_0$ of the uncontrolled system fulfils:
$$
\cR_0 := \frac{\beta}{\gamma} > 1.
$$
This constant fully characterizes the dynamics of this system.

For any $u$ as above, one defines
\begin{equation*}
S_\infty(u):=\lim\limits_{t\rightarrow\infty}S(t),
\end{equation*}
for $(S,I)$ the solution to \eqref{SIR}.
The quantity $1-S_\infty(u)$ is the proportion of retired individuals after the outbreak.
It is called the {\em attack ratio}, or the {\em epidemic final size} when numbers of individuals are considered instead of proportions.
This notion plays a central role in the sequel.

For the uncontrolled model \eqref{SIR} (with $u\equiv 1$), the herd immunity is
\begin{equation}
\label{eq63}
S_\herd := \frac{\gamma}{\beta} = \frac{1}{\cR_0}.
\end{equation}
Any equilibrium $(S_\equi,0)$, $0 \leq S_\equi \leq 1$, of this system is {\em stable} if $0\leq S_\equi \leq S_\herd$, {\em unstable} if $S_\herd < S_\equi$, so that the disease prospers if introduced in the population with $S_0 > S_\herd$  (before it finally fades away), and dies out otherwise.
Coherently with this observation, if $u(t)$ equals $1$ after a finite time, then one has
$$
S_\infty(u) \leq S_\herd.
$$ 
In this optic, attempting to reduce the epidemic final size by finite-time intervention is equivalent to try to stop as closely as possible from the herd immunity threshold.

For any $0< T\leq T'$ and $\alpha\in[0,1)$, let $\mathcal{U}_{\alpha,T,T'} $ be defined by
\begin{equation*}
\mathcal{U}_{\alpha,T,T'} := \{u \in L^\infty(0,+\infty),\ \alpha\leq u(t) \leq 1 \text{ if } t\in [T,T'], u(t)=1 \text{ otherwise} \}.
\end{equation*}
We also consider the set of those functions $u_{T,T'}$ of $\cU_{\alpha,T,T'}$ defined by
\begin{equation}
\label{eq55}
u_{T,T'}=\mathds{1}_{[0,T]}+\alpha \mathds{1}_{[T,T']}+\mathds{1}_{[T',+\infty)},
\end{equation}
where the notation $\mathds{1}_\cdot$ denotes characteristic functions\footnote{That is e.g.\ $\mathds{1}_{[0,T]}(t)=1$ if $t\in [0,T]$, 0 otherwise.},
and denote $\mathbf 1$ the function of $L^\infty(0,+\infty)$ equal to 1 (almost) everywhere.


The main result of the paper is now given.
It indicates how to optimally implement distancing measures, in order to minimize the epidemic final size.
To state this result, introduce first the function $\psi$ given by
\begin{equation}
\label{eq:psi new}
\psi : T\ni [0,\infty)\mapsto  -\frac{I^{T}(T+D)}{I^{T}(T)}+(\alpha-1)\gamma \int_{T}^{T+D}\frac{I^{T}(T+D)}{I^{T}(t)}dt+1,
\end{equation} 
where $(S^{T}, I^{T})$ denotes the solution to \eqref{SIR} with $u=u_{T,T+D}$ defined in \eqref{eq55}.

\begin{theorem}\label{th:T}
For any $\alpha\in [0,1)$ and $D>0$, the optimal control problem
\begin{equation}
\label{OCP2}\tag{$\mathcal{P}_{\alpha,D}$}
\sup_{T\geq 0}\ \sup_{u\in\cU_{\alpha, T,T+D}} S_\infty(u)
\end{equation}
 admits a unique solution.
 The optimal control is equal to the function $u_{T^*,T^*+D}$ defined in \eqref{eq55}, where the value $T^*\geq 0$ is characterized by the fact that;
 \begin{itemize}
 \item if $\psi(0)\geq 0$, then $T^*=0$;
 \item if $\psi(0)<0$, then $T^*$ is the unique solution to 
 \begin{equation}\label{eq:psi=0}
 \psi(T^*)=0.
 \end{equation}
 \end{itemize}
Moreover, if $T^*>0$, then $S(T^*)>S_\herd$ if $\alpha >0$, and $S(T^*)=S_\herd$ if $\alpha=0$.
Last, fixing $S_0\in(S_\herd,1)$, it holds
\[
\lim\limits_{I_0\searrow 0^+} T^* = +\infty.
\]

\end{theorem}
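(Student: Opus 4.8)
The plan is to analyze the structure of the optimal control problem \eqref{OCP2} in two stages, following the strategy suggested by the statement. First, for a \emph{fixed} onset time $T$, one expects —as in \cite{Bliman:2020aa}— that the inner supremum $\sup_{u\in\cU_{\alpha,T,T+D}} S_\infty(u)$ is attained by the bang-bang control $u_{T,T+D}$ of \eqref{eq55}: pushing $u$ to its lowest admissible value $\alpha$ on the whole window $[T,T+D]$ is optimal. Granting this reduction, the problem collapses to the one-dimensional maximization $\sup_{T\ge 0} S_\infty(u_{T,T+D})$. The core of the proof is then to differentiate the map $T\mapsto S_\infty(u_{T,T+D})$ with respect to $T$ and to show that its sign is governed precisely by $\psi(T)$.

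The key computational step is therefore to establish a formula of the form $\frac{d}{dT}S_\infty(u_{T,T+D}) = c(T)\,\psi(T)$ with $c(T)>0$. To do this I would use the classical implicit relation for the final size: integrating $\dot S/S = -u\beta I$ and $\dot S + \dot I = -\gamma I$ one gets an expression for $S_\infty$ in terms of the trajectory on $[T,T+D]$ (before $T$ and after $T+D$ the dynamics are the uncontrolled SIR flow, for which the final size is a known increasing function of the state at the end of the window). Shifting the onset from $T$ to $T+dT$ amounts to: running the uncontrolled flow for an extra time $dT$ before the window and one less unit of uncontrolled flow with parameter $\alpha$ replacing $1$ at the matching instant — the bookkeeping of these two effects, linearized, produces exactly the three terms in \eqref{eq:psi new}: the ratio $-I^T(T+D)/I^T(T)$ (sensitivity of the post-window state to a shift of the window), the integral term weighted by $(\alpha-1)\gamma$ (the accumulated effect of the reduced transmission over the window), and the constant $1$ (the uncontrolled reference). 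One then checks that $\psi$ is \emph{continuous} in $T$ and, crucially, \emph{monotone} (strictly decreasing), which gives uniqueness of the root of \eqref{eq:psi=0}; monotonicity should follow from the fact that as $T$ grows the uncontrolled pre-window flow drives $(S,I)$ toward smaller $I$ and toward $S$ approaching $S_\herd$ from above, making each term in $\psi$ decrease. The sign analysis then yields the dichotomy: if $\psi(0)\ge 0$ the map $T\mapsto S_\infty$ is already nonincreasing at $0$ (hence everywhere, by monotonicity of $\psi$), so $T^*=0$; otherwise $\psi$ starts positive—wait, negative—so $S_\infty$ first increases then decreases, with the unique maximum at the zero of $\psi$.

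Next, the assertion on $S(T^*)$: if $\alpha=0$, on the window $[T^*,T^*+D]$ there is no transmission, so $S$ is constant there and $I$ decays; the optimality condition $\psi(T^*)=0$ forces $S(T^*)=S_\herd$, because starting the total lockdown while $S>S_\herd$ would be wasteful (the epidemic would still grow afterwards, and one could do strictly better by waiting) while starting it with $S<S_\herd$ is also suboptimal (the epidemic is already subcritical, so one should have locked down earlier) — this is most cleanly seen by substituting $\alpha=0$ into \eqref{eq:psi new} and reading off that $\psi(T^*)=0$ is equivalent to $S^{T^*}(T^*)=\gamma/\beta$. For $\alpha>0$ a similar but strict inequality analysis of $\psi$ at the point where $S=S_\herd$ shows $\psi>0$ there, hence the root $T^*$ occurs earlier, i.e. while $S(T^*)>S_\herd$.

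Finally, the limit $\lim_{I_0\searrow 0^+}T^*=+\infty$ for fixed $S_0\in(S_\herd,1)$: here I would argue by continuity/monotonicity of the uncontrolled flow in the initial data. For the bang-bang control $u_{T,T+D}$ to be optimal at a finite $T$ one needs $\psi(T)=0$, which in particular requires the pre-window uncontrolled trajectory to have brought the state near the herd-immunity regime (as just discussed, $S(T^*)$ is at most slightly above $S_\herd$). But the uncontrolled SIR flow started from $(S_0,I_0)$ with $I_0$ tiny takes an arbitrarily long time to move $S$ from $S_0$ down to any fixed level below $S_0$: the time to leave a neighborhood of the almost-equilibrium $(S_0,0)$ blows up like $\log(1/I_0)$ as $I_0\to0^+$ (this is the standard "slow takeoff" estimate, obtainable from $\dot I\ge(\beta S_0'-\gamma)I$ on an initial interval where $S$ stays above some $S_0'>S_\herd$). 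Since $T^*$ must exceed the time at which $S$ first drops to, say, $\tfrac12(S_0+S_\herd)$, and that time $\to+\infty$, we conclude $T^*\to+\infty$.

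\textbf{Main obstacle.} The delicate point is the derivative formula $\frac{d}{dT}S_\infty(u_{T,T+D})=c(T)\psi(T)$ together with the monotonicity of $\psi$: this requires careful differentiation of the final-size map through the matching conditions at both endpoints $T$ and $T+D$ of the lockdown window, and a sign-definite estimate showing $\psi$ is strictly decreasing (equivalently, that $S_\infty$ is strictly concave as a function of $T$ wherever it matters). Establishing the bang-bang reduction of the inner supremum, if not simply quoted from \cite{Bliman:2020aa}, is the second nontrivial ingredient, typically handled via a Pontryagin maximum principle argument or a direct monotonicity/rearrangement comparison.
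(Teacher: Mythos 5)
Your overall shape (reduce to a one-parameter family, differentiate the final size in $T$, characterize the optimum by $\psi$, get uniqueness from monotonicity of $\psi$) matches the paper, but two of your load-bearing claims are wrong as stated. First, the reduction of the inner supremum: it is \emph{not} true that for fixed $T$ the supremum over $\cU_{\alpha,T,T+D}$ is attained by $u_{T,T+D}$. The result quoted from \cite{Bliman:2020aa} (Theorem \ref{th1}) says the optimizer on a window $[T,T+D]$ is $u_{T+\varepsilon,T+D}$ for some $\varepsilon\in[0,D)$ which is generically positive when $T$ is early; so your collapse to the family $\{u_{T,T+D}\}$ is unjustified at this stage. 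The paper instead reduces to a 2D problem in $(T,\varepsilon)$, proves existence by showing large $T$ is suboptimal (Proposition \ref{pr2} — you never address why the noncompact supremum over $T\geq 0$ is attained), and only then kills $\varepsilon$ by an exchange argument: if $u_{T+\varepsilon,T+D}$ with $\varepsilon>0$ were globally optimal it would lie in $\cU_{\alpha,T+\varepsilon,T+\varepsilon+D}$, whose unique optimizer is a \emph{different} control that does strictly better (Proposition \ref{pr3}). Second, your monotonicity claim for $\psi$ has the wrong sign: the paper proves $\psi$ is strictly \emph{increasing}, which is what makes the dichotomy work ($\psi(0)\geq 0$ forces $\psi>0$, hence $j'>0$, hence $T^*=0$); with your conventions ($dS_\infty/dT=c\psi$, $c>0$, $\psi$ decreasing) the case $\psi(0)\geq 0$ would give $S_\infty$ \emph{increasing} at $0$ and hence $T^*>0$, contradicting the statement. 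Your heuristic for monotonicity also rests on a false premise ($I$ is not monotone along the uncontrolled flow), whereas the actual proof is a computation: the sensitivity formulas of Lemma \ref{lem:HatS2} for $\widehat{S(T)}$, $\widehat{S(t)}$, $\widehat{S(T+D)}$, a rewriting of $\psi$ via $\varphi(t)=\exp(\int_t^{T+D}(\alpha\beta S-\gamma))$, and the signs $\widehat{S(t)}<0$, $S$ decreasing. That computation is the heart of the proof and is entirely absent from your plan.

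The final limit $I_0\searrow 0^+$ also has a gap: you argue that $T^*$ must exceed the time at which $S$ drops to $\tfrac12(S_0+S_\herd)$, but that presupposes $S(T^*)\leq \tfrac12(S_0+S_\herd)$, i.e.\ a quantitative upper bound on $S(T^*)$ that nothing in your outline (or in the theorem, which only gives $S(T^*)>S_\herd$) provides. The paper instead expands the dynamics for small $I_0$ ($I^T(t)=I_0(e^{(\beta S_0-\gamma)T}e^{\omega(t-T)}+\eta(I_0))$ with $\omega=\alpha\beta S_0-\gamma$) and shows directly that $\psi(T)$ converges, uniformly on compact $T$-sets, to a strictly negative limit (Lemma \ref{le18}); combined with the characterization $\psi(T^*)=0$ this forces $T^*\to+\infty$. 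Your "slow takeoff" intuition is in the right spirit but does not close the argument as written.
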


For subsequent use, we denote $(S^*,I^*)$ the optimal solution, and $S_\infty^*$ the value function of problem \eqref{OCP2}, that is by definition:
\begin{equation}
\label{eq56}
S_\infty^* = S_\infty^*(S_0,I_0) := \sup_{T\geq 0}\ \sup_{u\in\cU_{\alpha, T,T+D}} S_\infty(u).
\end{equation}

Theorem \ref{th:T} establishes that, among all intervention strategies carried out on a time interval of length $D$  with an intensity located at each time instant between $\alpha$ and $1$, a single one minimizes the epidemic final size.
The corresponding control is bang-bang and consists in enforcing the most intense social distancing level $\alpha$ on the time interval $[T^*,T^*+D]$, where $T^*\geq 0$ is uniquely assessed in the statement.
The value of $T^*$ depends upon the initial value $(S_0,I_0)$ through the solution $(S^{T}, I^{T})$ of System \eqref{SIR} appearing in the expression \eqref{eq:psi new}.

Assessing the value of $\psi(T)$ for given $T\geq 0$ amounts to solve the ordinary differential equation \eqref{SIR} and to evaluate the quantity in \eqref{eq:psi new} ---tasks routinely achieved by standard scientific computational environments.
It is shown in the proof of Theorem \ref{th:T} (Section \ref{se23}) that, if $\psi(0)<0$, then $\psi$ is negative on $(0,T^*)$ and positive on $(T^*,\infty)$.
This remark permits implementation of an efficient bisection algorithm to assess the optimal value $T^*$; see more details on the implementation aspects in \ref{se6}.

We continue with some properties characterizing the dependence of the value function with respect to the parameters.
\begin{theorem}\label{th2}
The value function $S_\infty^*$ is increasing with respect to the parameter $D>0$ and decreasing with respect to the parameter $\alpha\in [0,1)$. 
\end{theorem}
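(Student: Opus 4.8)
The plan is to prove monotonicity in each parameter separately by comparing the feasible sets of the optimization problems and, for the within-window behavior, exploiting the comparison principle for the SIR dynamics together with the characterization of $S_\infty$ already available in the paper.

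First I would treat the dependence on $D$. Fix $\alpha$ and take $D_1 < D_2$. The key observation is that, for any admissible control window $[T, T+D_1]$ with control $u \in \cU_{\alpha, T, T+D_1}$, we can build a control in $\cU_{\alpha, T, T+D_2}$ achieving the same (or better) $S_\infty$: simply extend $u$ by the value $1$ on $(T+D_1, T+D_2]$, which does not change the trajectory at all since on an interval where $u \equiv 1$ the system is just the uncontrolled flow and $S_\infty$ is unaffected by merely relabelling which interval is "the window." More carefully, $u_{T,T+D_1}$ and $u_{T,T+D_2}$ restricted appropriately differ only on $(T+D_1, T+D_2]$, where the latter could take the value $\alpha$; since by Theorem \ref{th:T} the optimum over the larger window is bang-bang with value $\alpha$ on the whole window, and the optimum over the smaller window is a \emph{feasible} competitor (extend by $1$), we get $\sup$ over the $D_2$-problem $\geq$ $\sup$ over the $D_1$-problem. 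Strictness then follows from the fact that applying $\alpha$ on the extra interval $(T^*+D_1, T^*+D_2]$ strictly decreases $S$ (when $I > 0$ there, which holds because $I$ never reaches $0$ in finite time), hence strictly decreases the effective reproduction after the window and strictly increases $S_\infty$; here one uses the strict monotonicity of $S_\infty$ with respect to perturbations of the control, which I would extract from the final-size relation / the explicit formula for $S_\infty$ used in the proof of Theorem \ref{th:T}.

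Next I would treat the dependence on $\alpha$. Fix $D$ and take $\alpha_1 < \alpha_2$. Now $\cU_{\alpha_1, T, T+D} \supseteq \cU_{\alpha_2, T, T+D}$ for every $T$, since a smaller lower bound $\alpha_1$ only enlarges the set of admissible controls. Taking suprema over a larger set gives $S_\infty^*(\alpha_1) \geq S_\infty^*(\alpha_2)$, i.e. $S_\infty^*$ is nonincreasing in $\alpha$. For strict monotonicity, compare the two optimal controls $u_{T_1^*, T_1^*+D}$ (value $\alpha_1$ on its window) and $u_{T_2^*, T_2^*+D}$ (value $\alpha_2$): the latter is feasible for the $\alpha_1$-problem, and replacing its constant value $\alpha_2$ by $\alpha_1 < \alpha_2$ on the same window $[T_2^*, T_2^*+D]$ yields a strictly smaller $S$ throughout the window (again because $I>0$ there), hence a strictly larger $S_\infty$ by the comparison argument; therefore $S_\infty^*(\alpha_1) > S_\infty^*(\alpha_2)$.

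The main obstacle, I expect, is making the strict comparison rigorous: monotonicity of the \emph{set inclusions} is immediate, but strictness requires that a pointwise decrease of the control on a positive-measure subinterval (where $I>0$) produces a strict increase of $S_\infty$. I would establish this via the final-size identity: along the trajectory, $\frac{d}{dt}\bigl(\ln S(t) + \cR_0 u\text{-weighted term}\bigr)$-type relations, or more directly the representation of $S_\infty$ as the root of a monotone function of the "total infection pressure" $\int u(t) I(t)\,dt$, show that $S_\infty$ is strictly decreasing in that pressure. Decreasing $u$ on a set where $I>0$ strictly decreases this pressure integral, and one must check the feedback (lowering $u$ lowers $I$ later, which could in principle partly compensate); the cleanest route is to use the already-derived formula from the proof of Theorem \ref{th:T} expressing the optimal $S_\infty$, or to invoke a standard monotone-dependence lemma for the SIR final size under control. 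I would flag that this strict part may be deferred to, or reuse machinery from, Section \ref{se2}.
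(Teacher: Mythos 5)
Your first halves (the weak inequalities via the inclusions $\cU_{\alpha,T,T+D}\subset\cU_{\alpha',T,T+D'}$ for $D\leq D'$, $\alpha\geq\alpha'$) coincide with the paper's argument. The divergence, and the gap, is entirely in the strictness step. The paper does not attempt any trajectory comparison: it argues that if $S_\infty^*=S_\infty^{'*}$, then the optimal control of the smaller problem, being feasible for the larger one and achieving the same value, would be a second optimizer of the larger problem, distinct from its unique bang-bang optimizer $u_{T^*,T^*+D'}$ (which lies outside the smaller feasible set since either its window is too long or its value $\alpha'$ is below $\alpha$). This contradicts the uniqueness in Theorem \ref{th:T} and finishes the proof in two lines, with no comparison principle needed.

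Your direct route works for $D$ but not, as written, for $\alpha$. For $D$: comparing $u_{T,T+D_1}$ and $u_{T,T+D_2}$ with the \emph{same} onset $T$, the trajectories coincide up to $T+D_1$, and on $(T+D_1,T+D_2]$ Lemma \ref{le1} gives $\frac{d}{dt}\Phi_{\cR_0}(S,I)=\gamma(u-1)I<0$ for the longer lockdown versus $=0$ for the shorter one; combined with Lemma \ref{co1} and the monotonicity of $\Phi_{\cR_0}(\cdot,0)$ on $[0,S_\herd]$ this yields strictness cleanly (your stated mechanism, "smaller $S$ at the end of the window hence larger $S_\infty$", is not the right one, but the repair is immediate). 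For $\alpha$, however, the key claim --- that replacing the constant value $\alpha_2$ by $\alpha_1<\alpha_2$ on the \emph{same} window strictly increases $S_\infty$ --- does not follow from anything you have. By Lemmas \ref{le1} and \ref{co1} it amounts to showing $(1-\alpha_1)\int_T^{T+D}I_1\,dt>(1-\alpha_2)\int_T^{T+D}I_2\,dt$, where $I_1\leq I_2$ because the stricter lockdown depresses the infected curve inside the window; the two factors move in opposite directions and no pointwise bound settles the competition. This is exactly the feedback you flag and defer, and it is not a technicality: the monotone dependence of $S_\infty$ on pointwise perturbations of $u$ is essentially as deep as the optimality analysis itself. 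The uniqueness argument of the paper is the intended way to avoid having to prove it.
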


The statement of Theorem \ref{th2} corresponds to the intuition whereby longer or more intense interventions result in greater reduction of the epidemic final size. 

\begin{proof}[Proof of Theorem \ref{th2}]
Let $0 < D \leq D'$ and $1 > \alpha \geq \alpha' \geq 0$, with $(D,\alpha)\neq(D',\alpha')$, and denote for short $S_\infty^*$ and $S_\infty^{'*}$ the corresponding optimal costs.
From \eqref{eq56} and the observation that $\cU_{\alpha, T,T+D}\subset\cU_{\alpha', T,T+D'}$, one deduces easily that $S_\infty^*\leq S_\infty^{'*}$.
Assume by contradiction that $S_\infty^*= S_\infty^{'*}$.
Then the optimal value $S_\infty^{'*}$ is realized for two different optimal controls: one in $\cU_{\alpha, T,T+D}$ and one in $\cU_{\alpha', T,T+D'}\setminus\cU_{\alpha, T,T+D}$.
This contradicts the uniqueness of the optimal control, demonstrated in Theorem \ref{th:T}.
One thus concludes that $S_\infty^*< S_\infty^{'*}$.
\end{proof}

Theorem \ref{th2} leads to the following question: what is the benefit of increasing indefinitely the lockdown duration $D$, and is it possible by this mean to stop the disease spread arbitrarily close to the herd immunity?
The next result answers tightly this issue.

\begin{theorem}\label{th3}
For any $S_0\in(S_\herd,1)$, define
  \begin{equation}
  \label{eq323}
   \overline{\alpha}:=\frac{S_\herd}{S_0+I_0-S_\herd}(\ln S_0 - \ln S_\herd).
  \end{equation}
Then $\overline\alpha\in (0,1)$ and 
the following properties are fulfilled.
  \begin{enumerate}
    \item[(i)]
    If $\alpha\in [0,\overline{\alpha}]$, then
\begin{equation}
\label{eq324}
    \lim_{D\to + \infty} S_\infty^* = S_\herd.
\end{equation}
    \item[(ii)]
    If $\alpha \in (\overline{\alpha},1]$, then
\begin{equation}
\label{eq325}
\lim_{D\to + \infty} S_\infty^*
= S_\infty(\alpha\mathbf 1)
<S_\herd.
\end{equation}
\end{enumerate}
\end{theorem}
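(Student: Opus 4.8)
The plan is to bound $S_\infty^*$, which by Theorem~\ref{th2} is an increasing and (being $\le1$) convergent function of $D$, between an explicit lower and an explicit upper bound that both tend to the announced limit as $D\to+\infty$; throughout one may assume $I_0>0$, the case $I_0=0$ being trivial. The argument rests on the first integral of \eqref{SIR} under a constant control $u\equiv v\in(0,1]$: the quantity $S+I-\tfrac{S_\herd}{v}\ln S$ is conserved, so the limit $\Phi_v(s,i):=\lim_{t\to+\infty}S(t)$ of the solution issued from $(s,i)$ with $i>0$ is the unique root in $\bigl(0,\min\{s,S_\herd/v\}\bigr]$ of $g_v(x)=g_v(s)+i$, where $g_v(x):=x-\tfrac{S_\herd}{v}\ln x$. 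One records the elementary properties: $\Phi_v(s,i)<s$ for $i>0$; $\Phi_v$ is continuous at every point of positive second coordinate and at every $(s,0)$ with $0<s\le S_\herd/v$, where $\Phi_v(s,0)=s$; and, setting $\Phi_0(s,i):=s$ (total lockdown freezes $S$), these remain valid for $v=0$. Since $S_\infty(\alpha\mathbf 1)=\Phi_\alpha(S_0,I_0)$, the identity $g_\alpha(S_\herd)-g_\alpha(S_0)=(S_\herd-S_0)+\tfrac{S_\herd}{\alpha}\ln\tfrac{S_0}{S_\herd}$ together with the fact that $S_\herd\le S_\herd/\alpha$ lies on the decreasing branch of $g_\alpha$ yields the key equivalence
\[
S_\infty(\alpha\mathbf 1)\le S_\herd \ \Longleftrightarrow\ I_0+S_0-S_\herd\ge\tfrac{S_\herd}{\alpha}\ln\tfrac{S_0}{S_\herd}\ \Longleftrightarrow\ \alpha\ge\overline\alpha ,
\]
with all three being equalities exactly when $\alpha=\overline\alpha$; in particular $S_\infty(\alpha\mathbf 1)<S_\herd$ for $\alpha\in(\overline\alpha,1]$, which is the strict inequality in \eqref{eq325}, and $\overline\alpha\in(0,1)$ follows from $\ln(S_0/S_\herd)<S_0/S_\herd-1$.

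For the upper bounds: every $u\in\cU_{\alpha,T,T+D}$ equals $1$ after finite time, so $S_\infty(u)\le S_\herd$ and thus $S_\infty^*\le S_\herd$ for all $D$ --- this is the upper bound needed in (i). For (ii) a sharper bound comes from the two identities $\ln(S_0/S_\infty(u))=\beta\int_0^\infty uI\,dt$ and $S_0+I_0-S_\infty(u)=\gamma\int_0^\infty I\,dt$ (obtained by integrating $\tfrac{d}{dt}\ln S=-u\beta I$ and $\tfrac{d}{dt}(S+I)=-\gamma I$, using $I_\infty=0$): using $\alpha\le u$ in the first one gives $\ln(S_0/S_\infty(u))\ge\tfrac{\alpha}{S_\herd}(S_0+I_0-S_\infty(u))$, i.e. $g_\alpha(S_\infty(u))\ge g_\alpha(S_0)+I_0=g_\alpha(S_\infty(\alpha\mathbf 1))$, and since $S_\infty(u),S_\infty(\alpha\mathbf 1)\le S_\herd<S_\herd/\alpha$ both lie on the injective decreasing branch of $g_\alpha$, this forces $S_\infty(u)\le S_\infty(\alpha\mathbf 1)$; hence $S_\infty^*\le S_\infty(\alpha\mathbf 1)$ for all $D$.

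There remain the matching lower bounds. For $\alpha\in(\overline\alpha,1]$, I would feed the supremum the feasible control $u_{0,D}$, for which $S_\infty(u_{0,D})=\Phi_1(s_D,i_D)$ where $(s_D,i_D)$ is the state at time $D$ of the solution of \eqref{SIR} with $u\equiv\alpha$ started at $(S_0,I_0)$; as $D\to+\infty$ one has $(s_D,i_D)\to(\Phi_\alpha(S_0,I_0),0)=(S_\infty(\alpha\mathbf 1),0)$, and since $S_\infty(\alpha\mathbf 1)<S_\herd$ the map $\Phi_1$ is continuous there with value $S_\infty(\alpha\mathbf 1)$, so $S_\infty^*\ge S_\infty(u_{0,D})\to S_\infty(\alpha\mathbf 1)$; combined with the previous paragraph this proves \eqref{eq325}. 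For $\alpha\in[0,\overline\alpha]$, let $(\widehat S,\widehat I)$ be the uncontrolled solution and put $\phi(T):=\Phi_\alpha(\widehat S(T),\widehat I(T))$; then $\phi$ is continuous, $\phi(0)=S_\infty(\alpha\mathbf 1)\ge S_\herd$ (first paragraph) and $\phi(T)\to\Phi_\alpha(S_\infty(\mathbf 1),0)=S_\infty(\mathbf 1)<S_\herd$ as $T\to+\infty$, so there exists $T_\alpha\ge0$ with $\phi(T_\alpha)=S_\herd$ (a short computation gives $\phi'<0$, so $T_\alpha$ is unique and, for $\alpha>0$, satisfies $\widehat S(T_\alpha)>S_\herd$, consistently with Theorem~\ref{th:T}). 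Using the feasible control $u_{T_\alpha,T_\alpha+D}$: on $[T_\alpha,T_\alpha+D]$ the solution with $u\equiv\alpha$ started at $(\widehat S(T_\alpha),\widehat I(T_\alpha))$ has state at time $T_\alpha+D$ converging, as $D\to+\infty$, to $(\Phi_\alpha(\widehat S(T_\alpha),\widehat I(T_\alpha)),0)=(S_\herd,0)$, whence $S_\infty(u_{T_\alpha,T_\alpha+D})=\Phi_1(\text{state at }T_\alpha+D)\to\Phi_1(S_\herd,0)$. The crucial point is that, although $\Phi_1$ jumps at $(s,0)$ whenever $s>S_\herd$, it is continuous at $(S_\herd,0)$ because $g_1$ attains its minimum exactly at $S_\herd$: for any $(s_n,i_n)\to(S_\herd,0)$ with $i_n>0$ one has $\Phi_1(s_n,i_n)\in(0,S_\herd)$ and $g_1(\Phi_1(s_n,i_n))=g_1(s_n)+i_n\to g_1(S_\herd)$, hence $\Phi_1(s_n,i_n)\to S_\herd$ by injectivity of $g_1$ on $(0,S_\herd]$. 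Therefore $S_\infty^*\ge S_\infty(u_{T_\alpha,T_\alpha+D})\to S_\herd$, which with $S_\infty^*\le S_\herd$ gives \eqref{eq324}.

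The main obstacle is precisely this behaviour near the herd-immunity point: a very long lockdown started at $T_\alpha$ drives the epidemic arbitrarily close to $(S_\herd,0)$, and the residual outbreak of the final free phase then wastes only a vanishing fraction of susceptibles exactly because $g_1$ is flat to first order at its minimum $S_\herd$; turning this heuristic into estimates --- together with the uniform-in-$D$ control of the solution of \eqref{SIR} over the unbounded window $[T_\alpha,T_\alpha+D]$ needed to justify the convergence of its terminal state --- is the technical heart of the proof, everything else amounting to manipulations of the first integrals of \eqref{SIR} and the uniqueness/monotonicity already supplied by Theorems~\ref{th:T}--\ref{th2}.
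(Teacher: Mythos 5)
Your proof is correct, but it takes a genuinely different route from the paper's. The paper's proof is essentially a two-line assembly: it imports both key inequalities --- the achievability of $S_\herd-\eps$ for $\alpha\le\overline\alpha$ and the uniform bound $S_\infty(u)\le S_\infty(\alpha\mathbf 1)$ for $\alpha>\overline\alpha$ --- from Theorem~1 of the cited reference \cite{Bliman:2020aa}, and then upgrades ``some $D$ works'' to the limit statement via the monotonicity of $S_\infty^*$ in $D$ (Theorem~\ref{th2}). You instead re-derive both ingredients from scratch out of the first integrals of \eqref{SIR}: the comparison $S_\infty(u)\le S_\infty(\alpha\mathbf 1)$ follows from the identities $\ln(S_0/S_\infty(u))=\beta\int_0^\infty uI$ and $S_0+I_0-S_\infty(u)=\gamma\int_0^\infty I$ together with $u\ge\alpha$ and the injectivity of $g_\alpha$ on its decreasing branch; and the lower bound in case (i) comes from the explicit feasible control $u_{T_\alpha,T_\alpha+D}$ started at the time $T_\alpha$ where $\Phi_\alpha(\widehat S(T_\alpha),\widehat I(T_\alpha))=S_\herd$, with the crucial observation that $\Phi_1$, though discontinuous at every $(s,0)$ with $s>S_\herd$, is continuous at $(S_\herd,0)$ because $g_1$ attains its minimum there. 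What your approach buys is self-containedness, an intrinsic characterization of $\overline\alpha$ as the exact threshold for $S_\infty(\alpha\mathbf 1)\le S_\herd$ (which the paper only states implicitly through the reference), and an explicit asymptotically optimal starting time $T_\alpha$; what it costs is length, and it reproves material the authors chose to cite. Two small remarks: your closing paragraph hedges about ``uniform-in-$D$ control over the unbounded window,'' but no uniformity is actually needed --- the terminal state at time $T_\alpha+D$ is just a single fixed $\alpha$-controlled trajectory evaluated at time $D$, whose convergence to $(\Phi_\alpha(\cdot),0)$ is the definition of the limit --- so the estimates you already wrote down complete the argument; and your squeeze between explicit bounds converging to the limit makes the appeal to Theorem~\ref{th2} dispensable, whereas the paper's argument genuinely relies on it.
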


In accordance with the notations introduced before, $\alpha\mathbf 1\equiv\alpha$ on $[0,+\infty)$, and $S_\infty(\alpha\mathbf 1)$ is the limit of $S(t)$ when $t\to +\infty$, for the solution of \eqref{SIR} corresponding to  $u=\alpha\mathbf 1$.

Theorem \ref{th3} establishes that, provided that the lockdown is sufficiently strong (more precisely, that $\alpha\leqslant \overline{\alpha}$), then long enough lockdown stops the disease propagation arbitrarily close after passing the herd immunity level.
On the contrary, if the lockdown is too moderate ($\alpha> \overline{\alpha}$), the power of such an action is intrinsically limited.
This phenomenon is clearly apparent in the simulations provided in Section \ref{se3}.

\begin{proof}[Proof of Theorem \ref{th3}]

One sees easily that $\overline\alpha >0$, due to the fact that $S_0>S_\herd$.
 On the other hand,
$$
\overline\alpha < \frac{S_\herd}{S_0-S_\herd}(\ln S_0 - \ln S_\herd)
= \frac{1}{S_0/S_\herd-1}\ln S_0/S_\herd
< 1.
$$

Assume now that $\alpha\leqslant \overline{\alpha}$.
\cite[Theorem 1]{Bliman:2020aa} establishes that, for any $\eps>0$, there exist $D>0$ and $u\in\cU_{\alpha, 0, D}$ such that  $S_\infty(u)\in [S_\herd-\eps,S_\herd]$.
As $S_\infty^* \geq S_\infty(u)$, this shows that
$$
    \limsup_{D\to + \infty} S_\infty^* \geq S_\herd.
$$
Due to the fact that $S_\infty^*$ is increasing with respect to $D$, as demonstrated by Theorem \ref{th2}, and that $S_\infty^* \leq S_\herd$ for any $D$, one gets \eqref{eq324}.

Suppose now $\alpha> \overline{\alpha}$.
In such conditions, \cite[Theorem 1]{Bliman:2020aa} shows that, for any $D>0$ and $u\in\cU_{\alpha, 0, D}$,
$$
S_\infty(u) \leq 
S_\infty(\alpha\mathbf 1) <S_\herd,
$$
so that
$$
\limsup_{D\to + \infty} S_\infty^* \leq S_\infty(\alpha\mathbf 1).
$$
On the other hand, the value of $S_\infty^*$ increases with $D$ (Theorem \ref{th2}), while $S_\infty(\alpha\mathbf 1)$ is the limit of $S_\infty(\alpha \mathds{1}_{[0,D]})$ for $D\to +\infty$.
This yields \eqref{eq325} and achieves the proof of Theorem \ref{th3}.
\end{proof}

\section{Numerical illustrations}
\label{se3}

We show in this Section the results of several numerical tests.
The algorithms designed to solve Problem \eqref{OCP2} are shown in \ref{se6}.
A case study is first presented in Section \ref{se31}, based on estimated conditions of circulation of the SARS‑CoV‑2 in France before and during the confinement enforced between March 17th and May 11th, 2020.
This example is chosen merely for its illustrative value, without claiming to a realistic description of the outburst.

The results provided and commented in Section \ref{se32} give a broader view.
They show the maximal final size reduction that may be obtained for a comprehensive set of basic reproduction numbers $\cR_0$, and for various realistic values of the maximal lockdown intensity $\alpha$ and duration $D$.

\subsection{Optimal lockdown in conditions of Covid-19 circulation in France, March--May 2020}
\label{se31}

The parameters used in the simulations of the present section are given in Table \ref{tab:value}.
We assume that, on the total number $N=\num{6.7e7}$ of individuals corresponding to the French population, there were initially no recovered individuals ($R_0=0$).
The initial number of infected individuals is taken equal to $1000$, a level crossed on March 8th \cite{Worldometer:aa}, so that $I_0=\num{1e3}/\num{6.7e7} \approx \num{1.49e-5}$.
Estimates of the infection rate $\beta$, of the recovery rate $\gamma$ and of the containment coefficient $\alpha_{\lock}$ in France between March 17th and May 11th 2020, are borrowed from \cite{Salje:2020aa}.
They yield the following values for the basic reproduction number and the herd immunity:
$$
\cR_0 \approx 2.9,\qquad  S_\herd\approx 0.34.
$$
With the initial conditions chosen here, the critical lockdown intensity defined in \eqref{eq323} is
$$
\overline\alpha \approx 0.56.
$$

\begin{table}[ht]
\centering
\begin{tabular}{|c|l|c|c|}
\hline
\textit{Parameter}&\textit{Name}&\textit{Value}\\\hline
 $\beta$& Infection rate &0.29 day$^{-1}$ \\\hline
 $\gamma$& Recovery rate &0.1 day$^{-1}$\\  \hline
     $\alpha_{\lock}$& Lockdown level (France, March-May 2020)&0.231 \\\hline
   $S_0$&Initial proportion of susceptible cases&$1-I_0$  \\ \hline
   $I_0$&Initial proportion of  infected cases & $\num{1.49e-5}$  \\ \hline
    $R_0$&Initial proportion of removed cases&0  \\ \hline
\end{tabular}
\caption{Value of the parameters used in the simulations for system \eqref{SIR} (see \cite{Salje:2020aa})}
\label{tab:value}
\end{table}

%

The optimal solution $(S^{*},I^{*},R^{*},u^*)$ of Problem \eqref{OCP2} for a containment duration of $30$ days (top), 60 days (middle) and 90 days (bottom) is shown in Fig.\ \ref{fig:u opt alpha=0}, when total lockdown is allowed ($\alpha=0$).
The evolution of the proportions of susceptible, infected and removed cases is shown on the left, the optimal control on the right.
The optimal dates for starting the enforcement are given in Table \ref{ta11}, together with the optimal asymptotic proportion of susceptible cases.

\begin{table}[h]
\begin{center}
\begin{tabular}{|c|c|c|c|}
\hline
$D$ & $T^*$ & $S_\infty^*$ & $S_\infty^*/S_\herd$\\
\hline\hline
No lockdown & --- &$0.0668$ &$0.194$ \\
\hline
$30$ days & $T^*=74.3$ days (May 21st) & $0.255$ & $0.739$\\
\hline
$60$ days &$T^*=74.3$ days (May 21st) & $0.323$&$0.937$ \\
\hline
$90$ days &$T^*=74.3$ days (May 21st) &$0.340$ &$0.985$ \\
\hline
\end{tabular}
\caption{Characteristics of the optimal solutions computed with the parameters of Table \ref{tab:value}, with lockdown intensity $\alpha=0$ and duration $D=0$ (no lockdown), $30, 60$ and $90$ days.
The starting dates are computed from the epidemic initial time on March 8th, where the cumulative number of infected exceeded 1000 cases.
See the curves in Figure \ref{fig:u opt alpha=0}, and explanations in text.}
\label{ta11}
\end{center}
\end{table}

\begin{table}[h]
\begin{center}
\begin{tabular}{|c|c|c|c|}
\hline
$D$ & $T^*$ & $S_\infty^*$ & $S_\infty^*/S_\herd$\\
\hline\hline
No lockdown & --- &$0.0668$ &$0.194$ \\
\hline
$30$ days & $T^*=72.1$ days (May 19th) & $0.222$ &$0.644$ \\
\hline
$60$ days & $T^*=71.5$ days (May 18th) & $0.302$ &$0.875$ \\
\hline
$90$ days & $T^*=71.3$ days (May 18th) & $0.331$ &$0.959$ \\
\hline
\end{tabular}
\caption{Similar to Table \ref{ta11}, with lockdown intensity $\alpha=\alpha_{\lock} \approx 0.231$.
See corresponding curves in Figure \ref{fig:compar sol}.}
\label{ta12}
\end{center}
\end{table}

As unveiled by close observation, one recovers the fact, established in Theorem \ref{th:T}, that $S(T^*)=S_\herd$: when $\alpha=0$, the optimal confinement starts exactly when the herd immunity threshold is crossed.
Also, the optimal value $S_\infty^*$ is larger when $D$ is larger (Theorem \ref{th2}), and it is known from Theorem \ref{th3} that this value converges towards $S_\herd$ when $D$ goes to infinity.
 It is indeed already indistinguishable from this value for $D=60$ and $90$ days.

Fig.\ \ref{fig:compar sol} shows the same numerical experiments than Fig.\ \ref{fig:u opt alpha=0}, with $\alpha =\alpha_{\lock} \approx 0.231 < \overline\alpha \approx 0.56$.
Optimal starting dates and asymptotic proportions of susceptible are given in Table \ref{ta12}.
The results are qualitatively similar to Fig.\ \ref{fig:u opt alpha=0}.
One sees that the lockdown begins earlier in the previous case, and the achieved $S_\infty^*$ are smaller.

The optimal starting dates given by the numerical resolution constitute an evident difference with the effective  implementation that took place during the Spring 2020 epidemic outburst: they are located in May, essentially at the time when, after two months of lockdown, first relaxation of the measures were introduced!
This should not be a surprise: the rationale behind this policy was not aimed at reaching herd immunity, but at reducing infections, in order to avoid overwhelming health systems and to be able to implement contact tracing on a tractable scale.
 On the contrary, the results in Fig.\ \ref{fig:u opt alpha=0} and \ref{fig:compar sol} show a peak of infected cases almost equal to 30\% of the population ---about twenty million people---, demonstrating that the strategy consisting of reaching herd immunity without considering other factors would not be sustainable, even if achieved under the optimal policy analyzed here.

\subsection{Maximal final size reduction under given epidemic and lockdown conditions}
\label{se32}

Once the optimal solution $u^*$ is computed, one may easily determine numerically, thanks to Lemma~\ref{co1} below, the optimal value $S_{\infty}^*$, by solving the equation
\begin{equation*}
S^*(T^*+D) + I^*(T^*+D) - \frac{\gamma}{\beta} \ln S^*(T^*+D)=
S_{\infty}^*- \frac{\gamma}{\beta} \ln S_{\infty}^*,
\end{equation*}
where, as said before, $(S^*, I^*)$ is the optimal solution.

Taking advantage of this principle, one may compute the optimal final size reduction corresponding to any epidemic and lockdown conditions.
To fix the ideas, the value $\gamma=0.1$ day$^{-1}$ is considered in this Section, corresponding to a mean recovery time of 10 days.
The general case is obtained by scaling: for \eqref{SIR} with different $\gamma$, the values of $S_\infty^*$ and $T^*$ are obtained as
$$
S_\infty^* = S_\infty^{'*},\qquad T^* = \frac{0.1}{\gamma} T^{'*}\ \text{day},
$$
where $S_\infty^{'*}$, $T^{'*}$ are the optimal cost and starting date obtained for the normalized system defined by the parameters $\gamma':= 0.1\ \text{day}^{-1}$ and
$$
\beta':= \frac{\beta}{\gamma} 0.1\ \text{day}^{-1},\qquad
\alpha' := \alpha,\qquad
D':= \frac{\gamma}{0.1} D\ \text{day}
$$
(in such a way that $\cR_0=\beta'/\gamma' = \beta/\gamma$).


\begin{figure}[H]
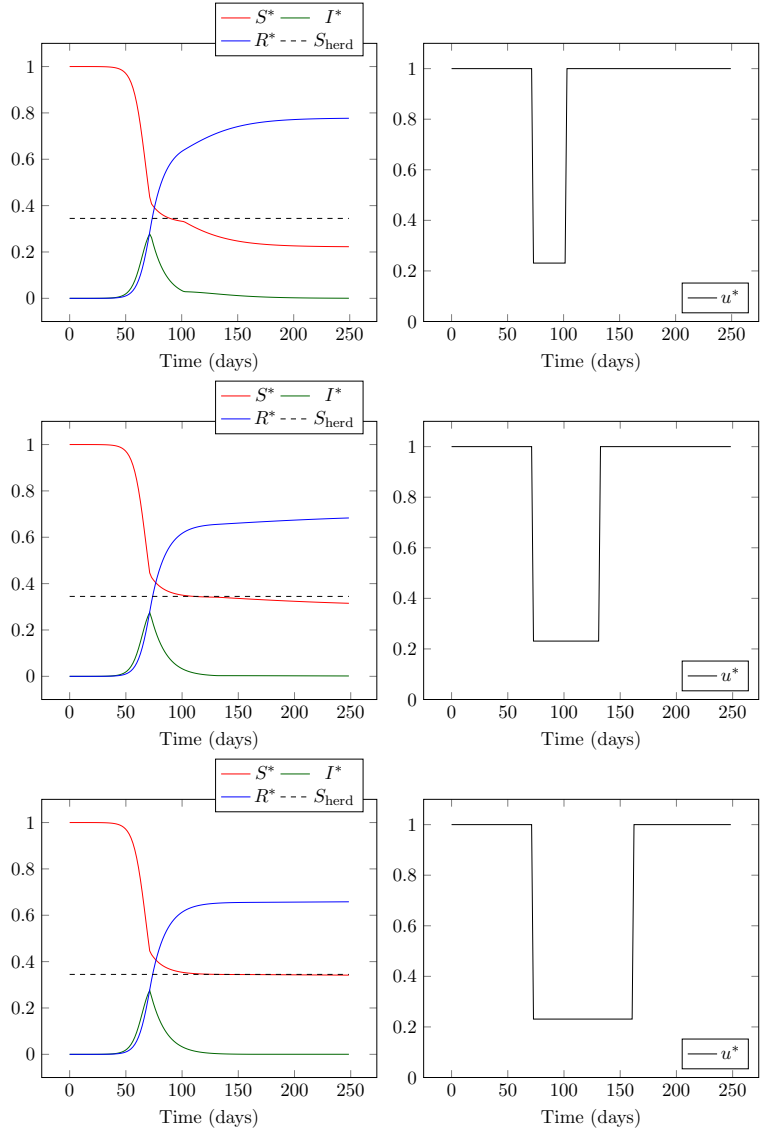
 
\begin{center}

\end{center}
\caption{Same than Figure \ref{fig:u opt alpha=0}, with $\alpha=\alpha_{\lock} \approx 0.231$.
Numerical values are provided in Table \ref{ta12}.
\label{fig:compar sol}}
\end{figure}

Fig.~\ref{fig:Sinf} shows, for an assortment of values of $\cR_0$ ranging from $1.5$ to $10$, the optimal final size value $S_{\infty}^*/S_\herd$ as a function of the duration $D$, for several values of the lockdown intensity $\alpha$ ranging from 0 to $0.8$.
 For $D$ tending to 0, all curves meet at a common value that corresponds to the final size attained in absence of lockdown.
One observes that the optimal value $S_{\infty}^*/S_\herd$ increases as a function of the lockdown duration $D$, and decreases as a function of its maximal intensity $\alpha$, as announced in Theorem \ref{th2}.
For $0<\alpha<\overline\alpha$, the optimal value $S_{\infty}^*$ converges towards $S_\herd$ (the best value one can expect) when $D$ increases indefinitely; while for $\overline\alpha<\alpha \leq 1$, the optimal value is strictly smaller, and decreases with respect to $\alpha$, as predicted by Theorem \ref{th3}.
Observe that the value of $\overline\alpha$ decreases when $\cR_0$ increases, making social distancing less efficient for diseases with larger basic reproduction number.
One sees that, depending upon the initial conditions, the optimal lockdown policy may induce a significant increase of the final size.

For the same assortment of values of $\cR_0$, we represent in Fig.\ \ref{fig:T} the dependency of $S_{\infty}^*/S_\herd$ with respect to the parameter $\alpha$, for values of $D$  corresponding to 1, 2, 4 and 8 months. 
For $\alpha$ tending to 1, the value of $S_{\infty}^*/S_\herd$ goes to the value achieved without control.

In the same way that Fig.\ \ref{fig:Sinf} and \ref{fig:T} revealed the dependence of $S_{\infty}^*/S_\herd$ respectively upon $D$ and $\alpha$, Fig.\ \ref{fig:T star D} and \ref{fig:T star alpha} show the dependence of $T^*$ with respect to these parameters.
Fig.\ \ref{fig:T star D} shows the variation of $T^*$ with respect to $D$, for the same values of $\alpha$ than Fig.\ \ref{fig:Sinf}.
The value of $T^*$ decreases as a function of $D$ and of $\alpha$.
One observes that for $D$ close to 0, the optimal intervention begins at the time where herd immunity is crossed, for every value of $\alpha$.
It converges to a positive limit when $\alpha>\overline\alpha$, while it converges to 0 when $\alpha<\overline\alpha$.
Notice that the value of $T^*$ highly depends upon $\cR_0$, ranging from more than 200 days for $\cR_0=1.5$ to less than 10 days for $\cR_0=10$.

Fig.\ \ref{fig:T star alpha} shows the variation of $T^*$ with respect to $\alpha$, for the same values of $D$ than Fig.\ \ref{fig:T}.
The value of $T^*$ also decreases with respect to $\alpha$.
When $\alpha=0$, the optimal starting point is at the crossing of the immunity threshold.

\section{Proof of Theorem \ref{th:T}}
\label{se2}

The proof is organized as follows.
We first recall in Section \ref{se20} results obtained in \cite{Bliman:2020aa} for the optimal control problem considered on intervention intervals of the type $[0,D]$, $D>0$.
Using these results, one shows in Section \ref{se21} that any solution of problem \eqref{OCP2} is of type \eqref{eq55} and may be determined by solving a 2D optimization problem.
It is subsequently shown in Section \ref{se22} that the latter problem may be simplified to a 1D optimization problem, whose study is achieved in Section \ref{se23}.
Last, the property on the limit of $T^*$ is demonstrated in Section \ref{se24}.

\subsection{Optimal control on a finite horizon $[0,D]$}
 \label{se20}

After introducing some notations, we recall here optimal control results from \cite{Bliman:2020aa}.

\begin{figure}[H]
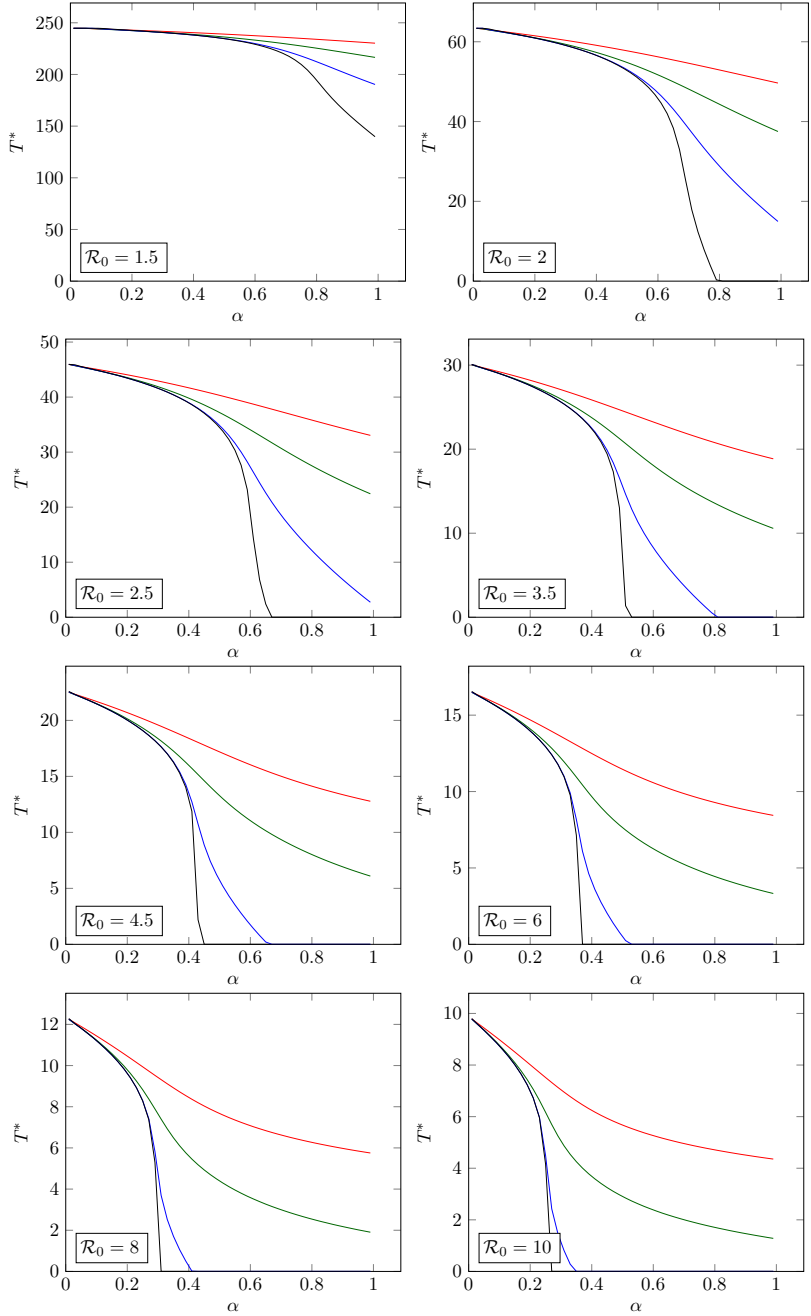
 
\begin{center}


\end{center}

\caption{
Graph of $T^*$  for Problem~\eqref{OCP2}  
as a function of $\alpha$, 
for $D\in\{30$ (\textcolor{red}{\large\bf --}), 
$60$ (\textcolor{black!60!green}{\large\bf --}), 
$120$ (\textcolor{blue}{\large\bf --}), 
$240$ (\textcolor{black}{\large\bf --})$\}$ and $\mathcal{R}_0=\{1.5, 2, 2.5, 3.5, 4.5 ,6, 8, 10\}$.
\label{fig:T star alpha}}
\end{figure}

For any $0\leq t\leq t'$, any input $u$,
and any admissible initial condition $X_0=(S_0,I_0)$, one denotes
$$
X(t',t;X_0;u) := (S(t',t;X_0;u),I(t',t;X_0;u))
$$
the value at time $t'$ of the solution of \eqref{SIR} departing at time $t$ from $X_0$, with the control input $u$.

This extended notation will be simplified when clear from the context.

Also, introduce the function $\Phi_\cR $ defined for any $\cR>0$ by:
\begin{equation}
\label{eq40}
\Phi_\cR :\RR_+^*\times \RR_+\ni (S,I) \longmapsto S + I - \frac{1}{\cR} \ln S.
\end{equation}
An important property is now given, which allows to define scalar quantities invariant along the trajectories.
See details in \cite[Lemma 3.1]{Bliman:2020aa}.
\begin{lemma}
\label{le1}
For any $u\in L^\infty([0,+\infty),[0,1])$ and $\gamma\in\RR$, one has
\begin{equation}
\label{eq34}
\frac{d}{dt} \left[
\Phi_\cR(S(t),I(t))
\right]
= \left(
\frac{\beta}{\cR} u(t) - \gamma
\right) I(t)
\end{equation}
along any trajectory of system \eqref{SIR}.
In particular, if $u$ is constant on a non-empty, possibly unbounded, interval, then the function $t\mapsto \Phi_{\cR_0 u}(S(t),I(t))$ is constant on this interval along any trajectory of system~\eqref{SIR}.
\end{lemma}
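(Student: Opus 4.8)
The plan is to obtain \eqref{eq34} by a direct application of the chain rule to the composite map $t \mapsto \Phi_\cR(S(t),I(t))$, and then to deduce the ``in particular'' claim by choosing $\cR$ so that the right-hand side vanishes.

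First I would record the regularity that legitimizes the computation. For $u \in L^\infty([0,+\infty),[0,1])$ the solution $(S,I)$ of \eqref{SIR} is understood in the Carathéodory sense, hence it is absolutely continuous and the two scalar equations of \eqref{SIR} hold for almost every $t \geq 0$. Integrating the first equation gives $S(t) = S_0 \exp\!\big(-\beta \int_0^t u(s) I(s)\, ds\big)$, so $S(t) > 0$ for all $t$ whenever $S_0 > 0$; thus the trajectory $(S(t),I(t))$ stays in the set $\RR_+^* \times \RR_+$ on which $\Phi_\cR$ is defined and $C^1$. Consequently the chain rule applies and, for almost every $t$,
\[
\frac{d}{dt}\big[\Phi_\cR(S(t),I(t))\big] = \Big(1 - \frac{1}{\cR S(t)}\Big)\dot S(t) + \dot I(t).
\]

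Next I would substitute the dynamics. Adding the two equations of \eqref{SIR} yields $\dot S + \dot I = -\gamma I$, while the first equation alone gives $-\tfrac{1}{\cR S}\dot S = \tfrac{\beta}{\cR}\,u I$. Summing these two contributions reproduces exactly $\frac{d}{dt}\Phi_\cR(S,I) = \big(\tfrac{\beta}{\cR} u(t) - \gamma\big) I(t)$, which is \eqref{eq34}; note that nothing in this computation uses the sign of $\gamma$. For the last assertion, suppose $u$ equals (a.e.) a constant $u_0$ on an interval $J$. If $u_0 > 0$, take $\cR = \cR_0 u_0 = \beta u_0/\gamma$, so that $\tfrac{\beta}{\cR} u_0 - \gamma = \gamma - \gamma = 0$; then by the identity just proved the derivative of the absolutely continuous function $t \mapsto \Phi_{\cR_0 u_0}(S(t),I(t))$ vanishes for almost every $t \in J$, hence this function is constant on $J$ (on each compact subinterval, and then on all of $J$ even if $J$ is unbounded). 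The value $u_0 = 0$ is degenerate, $\Phi_{\cR_0 u_0}$ being then undefined, but on such an interval $S$ is itself constant by the first equation, so the statement is used only for $u_0 > 0$, which is the relevant case in the sequel.

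There is essentially no obstacle here: once the algebra $\dot S + \dot I = -\gamma I$ and $\dot S/S = -\beta u I$ is in place, the identity is a one-line differentiation. The only point requiring a little care is the regularity bookkeeping — working with Carathéodory solutions, remembering that the equalities hold only almost everywhere, and verifying that the trajectory never leaves the domain of $\Phi_\cR$ so that the chain rule is valid — and this is already carried out in \cite[Lemma 3.1]{Bliman:2020aa}.
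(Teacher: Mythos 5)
Your proof is correct and is exactly the straightforward chain-rule computation that the paper itself omits, deferring instead to \cite[Lemma 3.1]{Bliman:2020aa}: the identities $\dot S + \dot I = -\gamma I$ and $\dot S/S = -\beta u I$ combine to give \eqref{eq34}, and the choice $\cR = \cR_0 u_0$ kills the right-hand side. Your additional remarks on Carath\'eodory regularity, positivity of $S$, and the degenerate case $u_0=0$ (which the paper indeed handles separately in the $\alpha=0$ step of Section \ref{se23}) are accurate and appropriate.
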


 The proof of Lemma \ref{le1} is straightforward and may be found in \cite{Bliman:2020aa}.
This result allows to characterize the epidemic final size resulting from the use of an input control in $\cU_{\alpha, T,T'}$,  as stated now, see details and proof in \cite[Lemma 3.2]{Bliman:2020aa}.

\begin{lemma}
\label{co1}
Let $0\leq T < T'$ and $u\in\cU_{\alpha, T,T'}$.
For any trajectory of \eqref{SIR}, 
$S_\infty(u)$ is the unique solution in $[0,S_\herd]$
of the equation
\begin{equation*}
\Phi_{\cR_0}(S_\infty(u),0) = \Phi_{\cR_0}(X(T',0;X_0;u)),
\end{equation*}
where $\Phi_{\cR_0}$ is given by \eqref{eq40}.
\end{lemma}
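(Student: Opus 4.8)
The plan is to exploit the conservation identity of Lemma~\ref{le1} on the terminal time interval, where the control equals $1$, and then pass to the limit $t\to+\infty$. First I would observe that, by definition of $\cU_{\alpha,T,T'}$, any admissible $u$ satisfies $u(t)=1$ for almost every $t\ge T'$; hence $u$ is constant on the (unbounded) interval $[T',+\infty)$. Applying Lemma~\ref{le1} with $\cR=\cR_0$, so that $\cR_0 u\equiv\cR_0$ there, shows that $t\mapsto\Phi_{\cR_0}(S(t),I(t))$ is constant on $[T',+\infty)$, hence equal to its value at $t=T'$. In other words, for every $t\ge T'$,
\[
\Phi_{\cR_0}\big(S(t),I(t)\big)=\Phi_{\cR_0}\big(S(T'),I(T')\big)=\Phi_{\cR_0}\big(X(T',0;X_0;u)\big).
\]

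Next I would record the asymptotic behaviour of the trajectory of \eqref{SIR}. Since $\dot S\le 0$ and $\dot R=\gamma I\ge 0$ with $R:=1-S-I$, both $S$ and $R$ are bounded and monotone, hence converge; therefore $I$ also converges, and from $\gamma\int_0^{+\infty}I(s)\,ds=R_\infty-R(0)\le 1<+\infty$ its limit must be $0$. Moreover $S(t)=S_0\exp\!\big(-\beta\int_0^{t}u(s)I(s)\,ds\big)$ together with $\int_0^{+\infty}I<+\infty$ (and $S_0>0$) gives $S_\infty(u)=\lim_{t\to+\infty}S(t)>0$. Because $S_\infty(u)>0$, the function $\Phi_{\cR_0}$ is continuous at $(S_\infty(u),0)$, so letting $t\to+\infty$ in the displayed identity yields
\[
\Phi_{\cR_0}\big(S_\infty(u),0\big)=\Phi_{\cR_0}\big(X(T',0;X_0;u)\big),
\]
which is the announced equation; furthermore $0<S_\infty(u)\le S_\herd$ by the remark preceding the lemma, since $u$ equals $1$ after the finite time $T'$.

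For uniqueness I would argue as follows. A solution necessarily lies in $(0,S_\herd]$, since $\Phi_{\cR_0}$ is undefined at $S=0$. On that interval, the map $S\mapsto\Phi_{\cR_0}(S,0)=S-\frac{1}{\cR_0}\ln S$ has derivative $1-\frac{1}{\cR_0 S}$, which is $\le 0$ and vanishes only at $S=S_\herd=1/\cR_0$; hence the map is strictly decreasing on $(0,S_\herd]$, therefore injective, and $S_\infty(u)$ is the unique solution in $[0,S_\herd]$.

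The only genuinely delicate point is the asymptotic claim that $S_\infty(u)>0$ and $I(t)\to 0$, which is what legitimates taking the limit inside $\Phi_{\cR_0}$ near its singularity at $S=0$; everything else is an immediate consequence of Lemma~\ref{le1} and elementary monotonicity. Since these SIR asymptotics are classical and already available in \cite{Bliman:2020aa}, the argument amounts to little more than bookkeeping on top of the cited results.
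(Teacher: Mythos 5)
Your proof is correct, and it follows exactly the route the paper intends: the paper itself only cites \cite[Lemma 3.2]{Bliman:2020aa} for this statement, but the ingredients it highlights around the lemma (constancy of $t\mapsto\Phi_{\cR_0}(X(t))$ on $[T',+\infty)$ via Lemma \ref{le1}, the classical SIR asymptotics $I(t)\to 0$ with $S_\infty(u)\in(0,S_\herd]$, and the strict monotonicity of $S\mapsto\Phi_{\cR_0}(S,0)$ on $(0,S_\herd]$) are precisely the ones you assemble. No gaps; the only implicit standing assumptions are $S_0>0$ and $I_0>0$, which the paper also uses tacitly.
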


Due to the fact that any control input $u\in\cU_{\alpha, T,T+D}$ is equals to 1 on $[T+D,+\infty)$, the map $t\mapsto \Phi_{\cR_0}(X(t,0;X_0;u))$ is constant on that interval.
On the other hand, the map $S\mapsto \Phi_{\cR_0}(S,0)$ is {\em decreasing} on the interval $[0,S_\herd]$, so arguing as in \cite{Bliman:2020aa}, one deduces that solving problem \eqref{OCP2} is equivalent to solve
\begin{equation}
\label{OCP3}
\inf_{T\geq 0}\ \inf_{u\in\cU_{\alpha, T,T+D}} \Phi_{\cR_0}(X(T+D,0;X_0;u)).
\end{equation}
This property is central to our approach: it transforms \eqref{OCP2}, which consists in maximizing the limit of $S$ at infinity, into an optimal control problem on a {\em finite time horizon}.
This reduction procedure is at the basis of the arguments in \cite{Ketcheson:2020aa} and \cite{Bliman:2020aa}.
Based on the latter one obtains the following result.
See details in \cite[Theorem 2.3]{Bliman:2020aa}.

\begin{theorem}
\label{th1}
For any $\alpha\in [0,1)$ and $D>0$, the optimal control problem
\begin{equation}
\label{OCP}
\sup_{u\in\cU_{\alpha, 0, D}} S_\infty(u)
\end{equation}
admits a unique solution.
Moreover the optimal control, denoted $u^*_{0,D}(X_0)$, is equal to the function $u_{T_0^*,D}$ defined in \eqref{eq55}, for some uniquely defined $T_0^*\in[0, D)$.
\end{theorem}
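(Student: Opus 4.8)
The plan is to reduce \eqref{OCP} to a finite-horizon optimal control problem and then apply the Pontryagin Maximum Principle (PMP). As already noted in the reduction leading to \eqref{OCP3} (taken with $T=0$), maximizing $S_\infty(u)$ over $\cU_{\alpha,0,D}$ is equivalent to minimizing $\Phi_{\cR_0}\bigl(X(D,0;X_0;u)\bigr)$; moreover, integrating identity \eqref{eq34} with $\cR=\cR_0$ over $[0,D]$ shows this is the same as \emph{maximizing} $\int_0^D\bigl(1-u(t)\bigr)I(t)\,dt$. Existence of a minimizer then follows from a Filippov-type argument: $\cU_{\alpha,0,D}$ is convex and weak-$*$ compact in $L^\infty(0,D)$, the right-hand side of \eqref{SIR} is affine in $u$ so the attainable set of $(S,I)$ at time $D$ is compact and convex, and $\Phi_{\cR_0}$ is continuous on the relevant (positively invariant, with $S,I>0$) region of state space.

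Next I would write the PMP for the finite-horizon problem with terminal cost $\phi(S,I)=S+I-\frac{1}{\cR_0}\ln S$. With $H=p_S(-u\beta SI)+p_I(u\beta SI-\gamma I)=u\beta SI\,(p_I-p_S)-\gamma Ip_I$, the adjoint system is $\dot p_S=-u\beta I(p_I-p_S)$, $\dot p_I=-u\beta S(p_I-p_S)+\gamma p_I$, with transversality $p_S(D)=1-\frac{1}{\cR_0S(D)}$ and $p_I(D)=1$. Since $H$ is affine in $u$ and $S,I>0$ on $[0,D]$ by positivity of \eqref{SIR}, the pointwise minimization over $u\in[\alpha,1]$ is ruled by the sign of the switching function $\sigma:=p_I-p_S$: optimally $u=\alpha$ where $\sigma>0$ and $u=1$ where $\sigma<0$. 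Singular arcs are excluded because $\sigma\equiv0$ on a subinterval would force, through $\dot\sigma=u\beta(I-S)\sigma+\gamma p_I$, that $p_I\equiv0$ there, hence $p_S\equiv0$, contradicting nontriviality of the adjoint state. Finally $\sigma(D)=\frac{1}{\cR_0S(D)}>0$, so $u=\alpha$ on a nontrivial subinterval ending at $D$; this already yields the ``maximal lockdown at the end'' part and $T_0^*<D$.

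The core of the argument is to prove that $\sigma$ changes sign \emph{exactly once} on $[0,D]$, necessarily from negative to positive. The key identity is $\dot\sigma=u\beta(I-S)\sigma+\gamma p_I$, whence at any zero $t_0$ of $\sigma$ one has $\dot\sigma(t_0)=\gamma p_I(t_0)$; so it suffices to establish the auxiliary sign property $p_I>0$ on all of $[0,D]$ (this is the step I expect to be the main obstacle). Granting it, every zero of $\sigma$ is a transversal upward crossing, so $\sigma$ vanishes at most once and passes from $\sigma<0$ ($u=1$) to $\sigma>0$ ($u=\alpha$); consequently every optimal control coincides with $u_{T_0^*,D}$ of \eqref{eq55}, where $T_0^*:=\inf\{t\in[0,D]:\sigma(t)>0\}\in[0,D)$. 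The positivity $p_I>0$ I would obtain by a backward argument from $p_I(D)=1$: if $t_1<D$ were the largest zero of $p_I$, then $p_I>0$ on $(t_1,D]$ and $\dot p_I(t_1^+)=u(t_1^+)\beta S(t_1)p_S(t_1)\ge0$; the case $u(t_1^+)=0$ (possible only if $\alpha=0$) is excluded since it would force $p_I\equiv0$ right of $t_1$, so $p_S(t_1)\ge0$, i.e.\ $\sigma(t_1)\le0$, and then — if $\sigma(t_1)=0$ the whole adjoint state vanishes at $t_1$, a contradiction, while if $\sigma(t_1)<0$ one follows the coupled pair $(\sigma,p_I)$ on the interval to the right of $t_1$ where $u=1$ and derives the needed contradiction. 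This delicate sign analysis of the linear costate system (with particular care at the discontinuity of $u$, where one-sided values must be used) is the technical heart of the proof; equivalently, it amounts to a monotonicity property of the map sending a candidate switch time to the terminal value $\Phi_{\cR_0}\bigl(X(D,0;X_0;\cdot)\bigr)$.

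Finally, uniqueness of the optimal control follows by reducing to a one-dimensional problem: having shown that every optimal $u$ is of the form $u_{T_0,D}$, it remains to check that the scalar function $T_0\mapsto S_\infty(u_{T_0,D})$ on $[0,D]$ has a unique maximizer, which is obtained by showing it is continuously differentiable with a derivative that either keeps a constant sign or vanishes at a single interior point — the same kind of monotonicity statement established, in the two-dimensional setting, in Sections~\ref{se22}--\ref{se23} of the present paper. Alternatively, one shows directly that two distinct optimal single-switch controls would yield two costate trajectories meeting the same boundary conditions and the same sign pattern for $\sigma$, which forces the switch time to be unique. I would carry out the switching-function analysis first, since it delivers the structural form \eqref{eq55} on which the scalar uniqueness argument relies.
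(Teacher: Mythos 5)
First, note that the paper does not actually prove this statement: Theorem \ref{th1} is imported wholesale from \cite{Bliman:2020aa} (Theorem 2.3 there), and the present text only records the reduction to the Mayer problem $\inf_u \Phi_{\cR_0}(X(D,0;X_0;u))$. Your overall route --- that reduction, Filippov existence, PMP, and a switching-function analysis showing a single sign change of $\sigma=p_I-p_S$ from negative to positive --- is essentially the route of the cited reference, so the architecture is sound. (Two small remarks: convexity of the attainable set is neither true in general nor needed --- Filippov only requires convexity of the velocity sets $f(x,[\alpha,1])$, which holds by affineness in $u$, and yields compactness of the reachable set, which suffices; and the uniqueness of $T_0^*$, which you defer to a scalar monotonicity argument of the type carried out in Sections \ref{se22}--\ref{se23}, is a genuine remaining step, not a formality, but your plan for it is the standard one.)

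The genuine gap is in what you yourself identify as the technical heart: the positivity of $p_I$. Your backward argument from the largest zero $t_1$ of $p_I$ stalls exactly in the case $\sigma(t_1)<0$. There, $\dot p_I(t_1)=-u\beta S(t_1)\sigma(t_1)>0$, which is perfectly consistent with $p_I$ crossing zero transversally upward at $t_1$ (i.e.\ $p_I<0$ just before $t_1$); ``following the coupled pair $(\sigma,p_I)$ where $u=1$'' produces no contradiction, and indeed nothing forces $p_I>0$ on all of $[0,D]$ in the region where $\sigma<0$. Fortunately you do not need global positivity of $p_I$: your own identity $\dot\sigma=u\beta(I-S)\sigma+\gamma p_I$ shows that only the sign of $p_I$ \emph{at zeros of $\sigma$} matters. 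That sign comes for free from the conservation of the minimized Hamiltonian along the optimal trajectory (the problem is autonomous and in Mayer form): with $H=u\beta SI\sigma-\gamma Ip_I$, evaluating at $t=D$ where $u=\alpha$ and $\sigma(D)=\tfrac{1}{\cR_0 S(D)}$ gives $H\equiv\gamma(\alpha-1)I(D)<0$, and at any zero $t_0$ of $\sigma$ the relation $-\gamma I(t_0)p_I(t_0)=\gamma(\alpha-1)I(D)$ yields
\[
p_I(t_0)=\frac{(1-\alpha)\,I(D)}{I(t_0)}>0 .
\]
Hence every zero of $\sigma$ is an upward transversal crossing, $\sigma$ vanishes at most once, and since $\sigma(D)>0$ the optimal control is $\mathbf 1$ then $\alpha$, as claimed. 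Replacing your $p_I>0$ lemma by this one-line Hamiltonian argument closes the gap; the rest of your plan (existence, exclusion of singular arcs, and the final scalar uniqueness of $T_0^*$) can then be carried out as you describe.
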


Theorem \ref{th1} shows that the optimal control is bang-bang with at most two switches: a first one possibly at some time $T_0^*\in [0, D)$, and a second one at time $D$.
Moreover, the value of $T_0^*$ appears as the result of a 1D optimization problem, as
$$
\sup_{u\in\cU_{\alpha, 0, D}} S_\infty(u) = \sup_{T_0^*\in[0, D)} S_\infty(u_{T_0^*,D}).
$$

\subsection{Reduction to a 2D optimization problem}
 \label{se21}

Consider now, for any $0\leq T< T'$, the general problem 
\begin{equation}
\tag{$\mathcal{P}_{\alpha,T,T'}$}
\label{OCQ}
\inf_{u\in\cU_{\alpha, T,T'}} \Phi_{\cR_0}(X(T',0;X_0;u)).
\end{equation}
The problem  ($\mathcal{P}_{\alpha,0,D}$) is equivalent to \eqref{OCP}, which is the subject of Theorem \ref{th1}.
The following result shows that problem \eqref{OCQ}, under its equivalent form \eqref{OCP3}, benefits from this result.
\begin{proposition}
\label{pr1}
Let $0\leq T < T'$.
There exists a unique optimal control $u^*_{T,T'}(X_0)$ $ \in\cU_{\alpha, T,T'}$ for problem \eqref{OCQ}
and it verifies:
\begin{equation}
\label{eq6}
u^*_{T,T'}(X_0)(t) = u^*_{0,T'-T}(X(T,0;X_0; \mathbf 1))(t - T), \quad t\in [T,T'].
\end{equation}
\end{proposition}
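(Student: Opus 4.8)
The idea is to reduce the problem ($\mathcal{P}_{\alpha,T,T'}$) on the window $[T,T']$ to the problem \eqref{OCP} on the window $[0,T'-T]$ by a time-shift of the dynamics, exploiting the fact that the control equals $\mathbf 1$ on $[0,T]$. First I would observe that for every $u\in\cU_{\alpha,T,T'}$ we have $u\equiv 1$ on $[0,T]$, so that the state at time $T$ is $X(T,0;X_0;\mathbf 1)$, independently of the particular $u$. Denote this common initial state by $\tilde X_0 := X(T,0;X_0;\mathbf 1)$. By the autonomy of System \eqref{SIR} (the right-hand side has no explicit time dependence beyond the input), the shifted input $\tilde u(s) := u(s+T)$, $s\in[0,T'-T]$, generates the trajectory $X(s,0;\tilde X_0;\tilde u) = X(s+T,0;X_0;u)$ for $s\in[0,T'-T]$. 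In particular $X(T',0;X_0;u) = X(T'-T,0;\tilde X_0;\tilde u)$. Moreover the map $u\mapsto\tilde u$ is a bijection from $\cU_{\alpha,T,T'}$ onto $\cU_{\alpha,0,T'-T}$ (the constraint $\alpha\leq u(t)\leq 1$ on $[T,T']$ becomes $\alpha\leq\tilde u(s)\leq 1$ on $[0,T'-T]$, and $u=1$ elsewhere corresponds to $\tilde u=1$ on $[T'-T,\infty)$; the restriction to $[0,T]$ plays no role once we have frozen $\tilde X_0$).

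Next I would transfer the optimization. Since $\Phi_{\cR_0}$ depends only on the state, the objective $\Phi_{\cR_0}(X(T',0;X_0;u))$ of \eqref{OCQ} equals $\Phi_{\cR_0}(X(T'-T,0;\tilde X_0;\tilde u))$, which is exactly the objective of ($\mathcal{P}_{\alpha,0,T'-T}$) with initial condition $\tilde X_0$ — and by the equivalence noted in the text between \eqref{OCQ} and \eqref{OCP} (via Lemma \ref{co1} and the monotonicity of $S\mapsto\Phi_{\cR_0}(S,0)$ on $[0,S_\herd]$), this is the problem \eqref{OCP} studied in Theorem \ref{th1}. Hence the infimum over $u\in\cU_{\alpha,T,T'}$ coincides with the infimum over $\tilde u\in\cU_{\alpha,0,T'-T}$, and because the bijection $u\leftrightarrow\tilde u$ is affine and objective-preserving, minimizers correspond to minimizers. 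Theorem \ref{th1} gives existence and uniqueness of the optimal $\tilde u$ for \eqref{OCP} with initial condition $\tilde X_0$, namely $\tilde u = u^*_{0,T'-T}(\tilde X_0) = u^*_{0,T'-T}(X(T,0;X_0;\mathbf 1))$. Transporting back via $u(t) = \tilde u(t-T)$ on $[T,T']$ (and $u=1$ outside) yields the unique optimal $u^*_{T,T'}(X_0)$ for \eqref{OCQ} and gives precisely formula \eqref{eq6}.

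The main point requiring care — though it is not really an obstacle — is the verification that the time-shift $u\mapsto\tilde u$ is a \emph{bijection} of the admissible sets and that it genuinely preserves the objective; this rests on the autonomy of \eqref{SIR} and on the fact that every admissible $u$ agrees on $[0,T]$, so that the ``past'' $[0,T]$ contributes only through the fixed state $X(T,0;X_0;\mathbf 1)$ and never through its cost. Once this is in place, existence, uniqueness, and the explicit characterization \eqref{eq6} follow immediately from Theorem \ref{th1} applied at the shifted initial condition. I would also note explicitly that uniqueness is inherited because a second optimal control for \eqref{OCQ} would, after shifting, produce a second optimal control for \eqref{OCP}, contradicting Theorem \ref{th1}.
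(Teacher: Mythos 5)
Your proposal is correct and follows essentially the same route as the paper: the authors define the same canonical time-shift bijection $\cC:\cU_{\alpha,0,T'-T}\to\cU_{\alpha,T,T'}$, invoke the semi-group property of the stationary system \eqref{SIR} to get $X(T',0;X_0;u)=X(T'-T,0;X(T,0;X_0;\mathbf 1);\cC^{-1}(u))$, and conclude that the correspondence preserves the objective so that Theorem \ref{th1} transfers existence and uniqueness. No gaps; your added remarks on why the interval $[0,T]$ contributes only through the frozen state match the paper's (more terse) reasoning.
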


Formula \eqref{eq6} and the fact that $u^*_{T,T'}(X_0) \in\cU_{\alpha, T,T'}$ imply that $u^*_{T,T'}(X_0)$ is equal to 1 on $[0,T]\cup [T',+\infty)$, so this function is uniquely defined on the whole $[0,+\infty)$ by the statement.
Proposition \ref{pr1} says that the optimal control for problem ($\mathcal{P}_{\alpha,T,T'}$) with initial value $X_0$ is equal to the optimal control for problem ($\mathcal{P}_{\alpha,0,T-T'}$) with initial value $X(T,0;X_0; \mathbf 1)$, delayed from the time duration $T$ and completed by 1 on the interval $[0,T]$.
In turn, the point $X(T,0;X_0; \mathbf 1)$ is the value at time $T$ of the solution of \eqref{SIR} departing at time 0 from $X_0$ with input equal to $\mathbf 1$.
Therefore, solving ($\mathcal{P}_{\alpha,T,T'}$) with initial condition $X_0$ amounts to solve ($\mathcal{P}_{\alpha,0,T'-T}$) with initial condition $X(T,0;X_0; \mathbf 1)$.

Before going further, let us prove the previous result.

\begin{proof}[Proof of Proposition \ref{pr1}]
One may define a canonical bijection $\cC\ :\ \cU_{\alpha, 0,T'-T} \to \cU_{\alpha, T,T'}$ by
$$
\cC(u)(t) = 1 \text{ if } t\in [0,T],\qquad \cC(u)(t) = u(t - T) \text{ if } t\in [T,+\infty)
$$
for any $u \in  \cU_{\alpha, 0,T'-T}$.
By the semi-group property deduced from the fact that system \eqref{SIR} is stationary, one has for any $u\in\cU_{\alpha, T,T'}$ and any $t\in [T,T']$,
\begin{equation*}
X(t,0;X_0;u) = X(t-T,0;X(T,0;X_0; \mathbf 1); \cC^{-1}(u)).
\end{equation*}
Applying this formula with $t=T'$ yields
$$
X(T',0;X_0;u) = X(T'-T,0;X(T,0;X_0; \mathbf 1); \cC^{-1}(u)).
$$
Therefore, for any $u\in\cU_{\alpha, T,T'}$,
\begin{equation*}
\Phi_{\cR_0}(X(T',0;X_0;u))
= \Phi_{\cR_0}(X(T'-T,0;X(T,0;X_0; \mathbf 1); \cC^{-1}(u))),
\end{equation*}
and this correspondence permits to achieve the demonstration.
\end{proof}

Using the qualitative properties of the solutions of problem \eqref{OCP3} recalled above, we deduce from Proposition \ref{pr1} that, for any $T\geq 0$, the problem
\begin{equation*}
 \inf_{u\in\cU_{\alpha, T,T+D}} \Phi_{\cR_0}(X(T+D,0;X_0;u))
\end{equation*}
admits a unique solution of the type $u_{T+\varepsilon_{T,D},T+D}$, for some $\varepsilon_{T,D} \in [0,D)$.
We thus have proved so far that problem \eqref{OCP2} is equivalent to the 2D optimization problem
\begin{equation*}
\inf_{T\geq 0} \inf_{\varepsilon\in [0,D)}\Phi_{\cR_0}(X(T+D,0;X_0;u_{T+\varepsilon,T+D})).
\end{equation*}

\subsection{Reduction to a 1D optimization problem}
\label{se22}

In this section, we further reduce the complexity of the optimal control problem under study.
We first show that the problem \eqref{OCP2} admits a solution.
This indeed amounts to show that no unbounded maximizing sequence of times $\{T_k\}_{k\in\NN}$ is to be found. 

\begin{proposition}
\label{pr2}
Problem \eqref{OCP2} admits at least one solution.
\end{proposition}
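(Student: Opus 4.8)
**Proof proposal for Proposition \ref{pr2} (existence of an optimal time $T^*$).**

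The plan is to show that the supremum over $T\geq 0$ in problem \eqref{OCP2} is attained, which—given the reduction already established—amounts to showing that the value function $T\mapsto \sup_{u\in\cU_{\alpha,T,T+D}}S_\infty(u)$ attains its supremum on $[0,\infty)$. By Proposition \ref{pr1} and Lemma \ref{co1}, this value function equals $T\mapsto S_\infty^{\mathrm{red}}(T):=$ the value obtained by solving $(\mathcal P_{\alpha,0,D})$ from the initial datum $X(T,0;X_0;\mathbf 1)$; equivalently, via \eqref{OCP3}, we must show that $T\mapsto \Phi_{\cR_0}(X(T+D,0;X_0;u^*_{T,T+D}))$ attains its infimum. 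The two ingredients I would use are: (i) continuity of this map in $T$ (continuous dependence of the flow of \eqref{SIR} on time and on initial conditions, plus the already-known well-posedness and uniqueness of the optimal control from Theorem \ref{th1}), and (ii) a coercivity/behavior-at-infinity statement ruling out an escaping maximizing sequence $T_k\to\infty$.

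First I would pin down the behavior as $T\to\infty$. Along the uncontrolled trajectory, since $\cR_0>1$ and $S_0>S_\herd$ (the interesting case; if $S_0\leq S_\herd$ the uncontrolled final size is already $\leq S_\herd$ and one checks $T^*=0$ directly), one has $S(t)\searrow S_\infty(\mathbf 1)$ and $I(t)\to 0$ as $t\to\infty$, with $S_\infty(\mathbf 1)<S_\herd$. Hence $X(T,0;X_0;\mathbf 1)\to (S_\infty(\mathbf 1),0)$, an equilibrium. Applying lockdown of intensity $\alpha$ on a window of length $D$ starting from a state arbitrarily close to $(S_\infty(\mathbf 1),0)$ produces, by Lemma \ref{co1} and the continuity of $\Phi_{\cR_0}$ and of the flow, a final size arbitrarily close to $S_\infty(\mathbf 1)$ (the trajectory barely moves because $I$ is tiny). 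Thus $\limsup_{T\to\infty}S_\infty^{\mathrm{red}}(T)\le S_\infty(\mathbf 1)<S_\herd$. On the other hand, $S_\infty^{\mathrm{red}}(0)\ge S_\infty(\alpha\mathds 1_{[0,D]})>S_\infty(\mathbf 1)$ for $D>0$ (strictly, since applying any nontrivial distancing strictly increases the final size—this follows from Lemma \ref{le1}, as the conserved quantity $\Phi$ strictly decreases less under $u=\alpha$ than under $u=1$). Therefore $\sup_{T\ge 0}S_\infty^{\mathrm{red}}(T)>\limsup_{T\to\infty}S_\infty^{\mathrm{red}}(T)$, so any maximizing sequence $\{T_k\}$ must remain in a bounded interval $[0,M]$.

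Next, a maximizing sequence $\{T_k\}\subset[0,M]$ has a convergent subsequence $T_{k_j}\to T_\infty\in[0,M]$. It remains to pass to the limit. Here I would invoke continuity of $T\mapsto S_\infty^{\mathrm{red}}(T)$: write $S_\infty^{\mathrm{red}}(T)$ via \eqref{OCP3} as $-$(monotone reparametrization of) $\inf_{\eps\in[0,D]}\Phi_{\cR_0}(X(T+D,0;X_0;u_{T+\eps,T+D}))$. The map $(T,\eps)\mapsto \Phi_{\cR_0}(X(T+D,0;X_0;u_{T+\eps,T+D}))$ is continuous on the compact set $[0,M]\times[0,D]$ (it is a finite composition of flows of \eqref{SIR} over bounded time intervals, each of which depends continuously on its length and its initial condition, composed with the continuous $\Phi_{\cR_0}$); hence its partial infimum over $\eps$ is continuous in $T$. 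Therefore $S_\infty^{\mathrm{red}}(T_\infty)=\lim_j S_\infty^{\mathrm{red}}(T_{k_j})=\sup_{T\ge 0}S_\infty^{\mathrm{red}}(T)$, i.e. $T_\infty$ is a maximizer. Then Proposition \ref{pr1} (or Theorem \ref{th1}) provides the associated optimal control $u_{T_\infty+\eps_\infty,\,T_\infty+D}$, proving that \eqref{OCP2} has at least one solution.

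The main obstacle I anticipate is making the behavior-at-infinity argument fully rigorous, specifically the uniform-in-$T$ control of $S_\infty^{\mathrm{red}}(T)$ as $X(T,0;X_0;\mathbf 1)$ approaches the equilibrium $(S_\infty(\mathbf 1),0)$: one must argue that $S_\infty$ as a function of the state at the start of the lockdown window is continuous \emph{up to} the boundary where $I=0$, and this is exactly the content of Lemma \ref{co1} combined with continuous dependence of the flow—but it should be spelled out that the window length $D$ is fixed, so no uniformity over unbounded horizons is needed. The continuity-in-$T$ step is routine ODE theory; the separation $\sup > \limsup_{T\to\infty}$ is where the hypothesis $\cR_0>1$ (equivalently $S_0>S_\herd$, ensuring a genuine outbreak whose final size lies strictly below herd immunity and is strictly improved by any finite-time action) does the real work.
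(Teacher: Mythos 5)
Your proof is correct and follows the same skeleton as the paper's: exhibit a fixed control whose final size strictly exceeds $S_\infty(\mathbf 1)$, show that controls starting at large $T$ cannot beat it, restrict the search to the compact set $[0,T_1]\times[0,D]$, and conclude by continuity of $(T,\eps)\mapsto \Phi_{\cR_0}(X(T+D,0;X_0;u_{T+\eps,T+D}))$. The one substantive difference is how the tail is handled: you bound $\limsup_{T\to\infty}$ of the value function by $S_\infty(\mathbf 1)$ via continuity of the flow near the limit point $(S_\infty(\mathbf 1),0)$ --- precisely the step you flag as delicate, since it requires continuity of the final size up to the boundary $I=0$ --- whereas the paper obtains the same conclusion in one line from monotonicity: any $u\in\cU_{\alpha,T,T+D}$ with $T>T_1$ equals $1$ on $[0,T_1]$, so its trajectory coincides there with the uncontrolled one, and since $S$ is nonincreasing, $S_\infty(u)\le \bar S(T_1)$; it then suffices to pick $T_1$ with $S_\infty(\mathbf 1)<\bar S(T_1)<S_\infty(u_{\bar T,\bar T+D})$. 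This sidesteps the boundary-continuity issue entirely. Two minor remarks: your parenthetical justification of strictness is reversed in direction --- by Lemma \ref{le1}, $\Phi_{\cR_0}$ is \emph{conserved} under $u=\mathbf 1$ and \emph{strictly decreases} under $u=\alpha<1$ while $I>0$, so it decreases \emph{more}, not less, under lockdown, which (since $\Phi_{\cR_0}(\cdot,0)$ is decreasing on $(0,S_\herd)$) is what yields the strictly larger final size; and the paper's comparison control starts at the herd-immunity crossing time $\bar T$ rather than at $T=0$, though either choice works for the argument.
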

\begin{proof}
Consider $\bar X := (\bar S,\bar I)$ the solution associated to $u= \mathbf 1$, and let $\bar T$ be defined by $\bar S (\bar T) = S_\herd$.
For this value $\bar T$, define $X^{\bar T} := (S^{\bar T},I^{\bar T})$ as the solution to system  \eqref{SIR} associated to $u_{\bar T,\bar T+D}$.
Lemma \ref{le1} shows that
\begin{itemize}
\item
the map $t\mapsto \Phi_{\cR_0}(\bar X(t))$ is constant on $[0,+\infty)$;
\item
the map $t\mapsto \Phi_{\cR_0}(X^{\bar T}(t))$ is constant on $[0,\bar T]$ and on $[\bar T+D, +\infty)$;
\item
the value of $\Phi_{\cR_0}(X^{\bar T}(t))$ on $[0,\bar T]$ is {\em smaller} than the value on $[\bar T+D, +\infty)$, because \eqref{eq34} implies that this map cannot increase on $[\bar T,\bar T+D]$.
\end{itemize}
Therefore, the fact that $\Phi_{\cR_0}(\bar X(0)) = \Phi_{\cR_0}(X_0) = \Phi_{\cR_0}(X^{\bar T}(0))$ implies
$$\Phi_{\cR_0}(X^{\bar T}(\bar T+D)) < \Phi_{\cR_0}(\bar X(\bar T+D)),$$
and thus
$$
S_{\infty}(u_{\bar T,\bar T+D})-\frac{\gamma}{\beta}\ln(S_{\infty}(u_{\bar T,\bar T+D}))
< S_{\infty}({\mathbf 1})-\frac{\gamma}{\beta}\ln(S_{\infty}({\mathbf 1})).
$$

Since $\Phi_{\cR_0}(\cdot,0)$ is decreasing on $(0,S_\herd)$ and, by Lemma \ref{co1}, $S_{\infty}(u_{\bar T,\bar T+D}),$ $S_{\infty}({\mathbf 1})< S_\herd$, we deduce that
$$S_{\infty}(u_{\bar T,\bar T+D}) > S_{\infty}({\mathbf 1}).$$

There thus exists $T_1$ sufficiently large, so that
$$S_{\infty}({\mathbf 1}) < \bar S(T_1) < S_{\infty}(u_{\bar T,\bar T+D}).$$
Since $S$ decreases along every trajectory, for each $T>T_1$ and $\varepsilon\in(0,D)$, one has
$$S_{\infty}(u_{T+\varepsilon,T+D})< \bar S(T_1),$$
because $u_{T+\varepsilon,T+D} \equiv 1$ on $[0,T_1] \subset [0,T+\varepsilon]$. 
Therefore, one may thus restrict the search for optimal solutions of problem \eqref{OCP2} to those $(T,\varepsilon)$ that belong to the set $[0, T_1]\times [0,D]$.
We conclude by observing that the problem
\begin{equation}
\label{OCP2ter}
\inf_{T\in [0, T_1]} \inf_{\varepsilon\in [0,D]}\Phi_{\cR_0}(X(T+D,0;X_0;u_{T+\varepsilon,T+D})),
\end{equation}
which consists in optimizing a continuous function on a finite-dimensional compact set, admits a non-void set of solutions.
\end{proof}

We now show in the next result that every possible optimal solution for problem \eqref{OCP2ter} corresponds to $\varepsilon=0$.
In other terms, any optimal policy consists in applying the more intense lockdown intensity during a duration exactly equal to $D$, not less.

\begin{proposition}
\label{pr3}
Any solution of problem \eqref{OCP2} is of the type $u_{T,T+D}$ for some $T\geq 0$.
\end{proposition}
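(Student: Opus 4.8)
The plan is to argue by contradiction, starting from the characterization obtained in Section \ref{se21}: every solution of \eqref{OCP2} is of the form $u_{T+\varepsilon,T+D}$ for some $T\geq 0$ and $\varepsilon\in[0,D]$, and it realizes the optimal value $S_\infty^*$. Suppose such a solution has $\varepsilon>0$; I would then exhibit an admissible control achieving a strictly larger value of $S_\infty$, contradicting optimality. The natural candidate is $u_{T+\varepsilon,T+\varepsilon+D}$, which keeps the same \emph{onset} $T+\varepsilon$ of maximal distancing but stretches it to the full admissible duration $D$, i.e.\ over $[T+\varepsilon,T+\varepsilon+D]$ instead of $[T+\varepsilon,T+D]$.

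The first step is to observe that, since $u_{T+\varepsilon,T+D}$ and $u_{T+\varepsilon,T+\varepsilon+D}$ coincide on $[0,T+\varepsilon]$ (both equal to $1$) and on $[T+\varepsilon,T+D]$ (both equal to $\alpha$), the associated trajectories of \eqref{SIR} agree on $[0,T+D]$; denote by $X^\sharp$ their common value at time $T+D$. Along the trajectory generated by $u_{T+\varepsilon,T+D}$ the control equals $1$ on $[T+D,+\infty)$, so Lemma \ref{le1} gives $\Phi_{\cR_0}\equiv\Phi_{\cR_0}(X^\sharp)$ on that interval. Along the trajectory generated by $u_{T+\varepsilon,T+\varepsilon+D}$, the control equals $\alpha$ on $[T+D,T+\varepsilon+D]$, so \eqref{eq34} with $\cR=\cR_0$ (hence $\beta/\cR_0=\gamma$) reads $\frac{d}{dt}\Phi_{\cR_0}(S(t),I(t))=\gamma(\alpha-1)I(t)$. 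Since $\alpha<1$ and $I(t)>0$ for every finite $t$ (the initial infected fraction being positive), this derivative is strictly negative on $[T+D,T+\varepsilon+D]$, whence $\Phi_{\cR_0}\big(X(T+\varepsilon+D,0;X_0;u_{T+\varepsilon,T+\varepsilon+D})\big)<\Phi_{\cR_0}(X^\sharp)$.

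The second step concludes via Lemma \ref{co1}: both $S_\infty(u_{T+\varepsilon,T+D})$ and $S_\infty(u_{T+\varepsilon,T+\varepsilon+D})$ are the solutions in $[0,S_\herd]$ of the equation $\Phi_{\cR_0}(\cdot,0)=c$, with $c$ equal to, respectively, $\Phi_{\cR_0}(X^\sharp)$ and the strictly smaller quantity just obtained. As $\Phi_{\cR_0}(\cdot,0)$ is strictly decreasing on $(0,S_\herd)$, this yields $S_\infty(u_{T+\varepsilon,T+\varepsilon+D})>S_\infty(u_{T+\varepsilon,T+D})=S_\infty^*$. Since $u_{T+\varepsilon,T+\varepsilon+D}\in\cU_{\alpha,T+\varepsilon,T+\varepsilon+D}$ is admissible for \eqref{OCP2}, this contradicts the fact that $S_\infty^*$ is the supremum in \eqref{OCP2}. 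Hence $\varepsilon=0$ for every solution, i.e.\ every optimal control is of the form $u_{T,T+D}$.

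I do not expect a genuine obstacle here, the ingredients (constancy of $\Phi_{\cR_0}$ under constant control, the sign of its derivative under $u=\alpha<1$, positivity of $I$, strict monotonicity of $\Phi_{\cR_0}(\cdot,0)$ on $(0,S_\herd)$) being all available from Section \ref{se20}. The only points requiring minor care are: checking that the comparison control is indeed admissible and of the bang-bang type \eqref{eq55}; verifying that the two trajectories genuinely coincide up to time $T+D$; and making sure the argument also covers the degenerate endpoint $\varepsilon=D$, where $u_{T+\varepsilon,T+D}$ reduces to $\mathbf 1$ while the comparison control $u_{T+D,T+2D}$ still strictly improves it by exactly the same computation.
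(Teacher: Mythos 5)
Your proof is correct, but it takes a different route from the paper's. The paper argues as follows: since $\varepsilon>0$, the candidate $u_{T+\varepsilon,T+D}$ also belongs to the larger admissible class $\cU_{\alpha,\,T+\varepsilon,\,T+\varepsilon+D}$; by Proposition \ref{pr1} and Theorem \ref{th1} the minimization of $\Phi_{\cR_0}$ over that class has a \emph{unique} solution, necessarily of the form $u_{T+\varepsilon+\delta,\,T+\varepsilon+D}$, which cannot coincide with $u_{T+\varepsilon,T+D}$ when $\varepsilon>0$; uniqueness then forces a strict inequality and hence a contradiction. You instead construct the improving competitor explicitly — $u_{T+\varepsilon,\,T+\varepsilon+D}$, same onset, full duration — note that the two trajectories coincide up to time $T+D$, and use Lemma \ref{le1} to show that $\Phi_{\cR_0}$ strictly decreases on $[T+D,T+\varepsilon+D]$ under the control $\alpha<1$ (since $I>0$ there, granting the standing assumption $I_0>0$), while it stays constant under the control $\mathbf 1$; Lemma \ref{co1} and the monotonicity of $\Phi_{\cR_0}(\cdot,0)$ on $(0,S_\herd)$ then convert this into a strict increase of $S_\infty$. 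Your argument is more elementary in that it does not invoke the uniqueness statement of Theorem \ref{th1} at this stage, it makes the improvement mechanism explicit (extending the lockdown strictly lowers the conserved quantity), and it handles the endpoint $\varepsilon=D$ by the same computation; the paper's argument is shorter because it delegates all the work to the already-established uniqueness on a window of length $D$. Both are complete proofs.
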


From Proposition \ref{pr3} one deduces that problem \eqref{OCP2} is equivalent to solving
\begin{equation}
\label{OCP2quar}
\inf_{T\in [0,\bar T]} \Phi_{\cR_0}(X(T+D,0;X_0;u_{T,T+D})).
\end{equation}
This achieves the announced reduction to a 1D optimization problem.

\begin{proof}[Proof of Proposition \ref{pr3}]
Assume by contradiction that $u_{T+\varepsilon,T+D}$ is solution to problem \eqref{OCP2} for some $\varepsilon>0$.
Then 
$u_{T+\varepsilon,T+D}\in \cU_{\alpha, T+\varepsilon,T+D} \cap \cU_{\alpha, T+\varepsilon,T+\varepsilon+D}$.
We know from Proposition \ref{pr2} and Theorem \ref{th1} that 
\begin{equation*}
 \inf_{u\in\cU_{\alpha, T+\varepsilon,T+\varepsilon+D}} \Phi_{\cR_0}(X(T+D,0;X_0;u))
\end{equation*}
admits a unique solution, which writes $u_{T+\varepsilon+\delta,T+\varepsilon+D}$ for some $\delta\geqslant 0$. Since 
$u_{T+\varepsilon,T+D}\neq u_{T+\varepsilon+\delta,T+\varepsilon+D}$, one has
$$ \Phi_{\cR_0}(X(T+D,0;X_0;u_{T+\varepsilon+\delta,T+\varepsilon+D}))< \Phi_{\cR_0}(X(T+D,0;X_0;u_{T+\varepsilon,T+D})).$$
This is in contradiction with the optimality of $u_{T+\varepsilon,T+D}$ for problem \eqref{OCP2}.
Therefore, $\varepsilon=0$ for any optimal control.
\end{proof}

\subsection{Solving the 1D optimization problem (\ref{OCP2quar})}
\label{se23}


We now achieve the demonstration of Theorem \ref{th:T}, through the study of problem \eqref{OCP2quar}, establishing in particular uniqueness of the optimum control.
Denote $T^*$ an optimal solution of \eqref{OCP2quar}.

\paragraph{Step 1: necessary first order optimality conditions}
Let $u=u_{T,T+D}$ be an optimal control for problem~\eqref{OCP2}.
Let us introduce the criterion $j$ given by
\[
j(T):=\Phi_{\mathcal{R}_0}(S^{T}(T+D),I^{T}(T+D))=I^{T}(T+D)+S^{T}(T+D) - \frac{\gamma}{\beta} \ln( S^{T}(T+D)),
\]
where, as done in the definition of $\psi$ in \eqref{eq:psi new},
$(S^{T},I^{T})$ is the solution corresponding to the control $u_{T,T+D}$.
For sake of simplicity, we will usually omit these subscripts in the sequel.
With this notation, \eqref{OCP2quar} simply writes
\begin{equation}\label{eq:opt j}
\inf_{T\geq 0} j(T).
\end{equation}
By using Lemma~\ref{co1}, one has for the solution $(S^{T},I^{T})$:
\begin{equation}\label{m1440}
\begin{array}{ll}
I(t)+S(t)- \frac{\gamma}{\beta} \ln S(t)=c_0 & \text{in }[0,T],\\
I(t)+S(t)-\frac{\gamma}{\alpha\beta} \ln S(t)=I(T)+S(T)-\frac{\gamma}{\alpha\beta} \ln S(T) & \text{in }[T,T+D],
\end{array}
\end{equation}
where $c_0:=I_0+S_0- \frac{\gamma}{\beta} \ln S_0$.
 Eliminating $I(t)$ from \eqref{SIR} with $u=u_{T,T+D}$, thanks to \eqref{m1440}, we infer that $S$ solves the system
\begin{subequations}
\begin{align}
& \dot S=-\beta S(c_0-S+ \frac{\gamma}{\beta} \ln S), \qquad \text{in }(0,T), \label{S1a}\\
& \dot S=-\alpha\beta S\left(c_0+\frac{\gamma}{\beta}\left(1-\frac{1}{\alpha} \right)\ln S(T)-S+\frac{\gamma}{\alpha\beta} \ln S\right), \quad \text{in }(T,T+D),  \label{S1b}
\end{align}
\end{subequations}
with the initial value $S(0)=S_0$.
Using \eqref{m1440}, one gets 
\begin{eqnarray*}
j(T) &=& I(T+D)+S(T+D)- \frac{\gamma}{\beta} \ln S(T+D)\\
&=& I(T)+S(T)-\frac{\gamma}{\alpha\beta} \ln S(T)+\frac{\gamma}{\beta}\left(\frac{1}{\alpha}-1\right) \ln S(T+D)\\
       &=& c_0+ \frac{\gamma}{\beta} \ln S(T)-\frac{\gamma}{\alpha\beta}\ln S(T)+\frac{\gamma}{\beta}\left(\frac{1}{\alpha}-1\right) \ln S(T+D),
\end{eqnarray*}
so that the cost function reads
\begin{equation*}
  j(T) = c_0+\frac{\gamma}{\beta}\left(\frac{1}{\alpha}-1\right) \ln \left(\frac{S^T(T+D)}{S^T(T)}\right).
\end{equation*}
We point out that this expression depends upon $T$ through the arguments $T$ and $T+D$ at which the function $S^T$ is considered;
but also through the value of the function $S^T$ itself, which depends upon $T$ through the input $u_{T,T+D}$. 
Special care is therefore needed to compute the derivative $j'$ of $j$ with respect to $T$.
This constitutes the subject of the following technical lemma, whose proof is postponed to the end of the section, for sake of clarity.
For simplicity, we denote in the sequel $\widehat{S(T+D)}$, $\widehat{S(T)}$ and $\widehat{S(t)}$ the derivatives of the functions $S^T(T+D)$, $S^T(T)$ and $S^T(t)$ with respect to $T$, that is:
\begin{gather*}
\widehat{S(T+D)} := \frac{\partial [S^T(T+D)]}{\partial T},\qquad \widehat{S(T)} := \frac{\partial [S^T(T)]}{\partial T},\\
\widehat{S(t)} := \frac{\partial [S^T(t)]}{\partial T},\qquad t\in (T,T+D).
\end{gather*}

\begin{lemma}\label{lem:HatS2}
The following formulas hold.
\begin{subequations}
\begin{gather}
\label{eq66a}
\hspace{-.6cm}
\widehat{S(T+D)}=\beta S^{T}(T+D)I^{T}(T+D)\left(-1+(\alpha-1)\gamma I^{T}(T)\int_{T}^{T+D}\frac{ds}{I^{T}(s)}\right),\\
\label{eq66b}
\widehat{S(T)}=-\beta S^T(T)I^T(T),\\
\label{eq66c}
\widehat{S(t)}=(\alpha-1)\beta S^T(t)I^T(t)\left(1+\gamma I^T(T)\int_{T}^{t}\frac{ds}{I^T(s)}\right),\quad
t\in (T,T+D).
\end{gather}
\end{subequations}
\end{lemma}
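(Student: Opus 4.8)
# Proof Plan for Lemma \ref{lem:HatS2}

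The plan is to differentiate the ODE system \eqref{S1a}--\eqref{S1b} with respect to the parameter $T$, treating the trajectory $S^T$ as a function of both time $t$ and the switching time $T$. The key subtlety, already flagged in the text, is that $T$ enters in two ways: through the evaluation points $T$ and $T+D$, and through the vector field itself (since the switching time is a parameter of the control $u_{T,T+D}$). I will organize the computation by handling the three formulas in the order \eqref{eq66b}, \eqref{eq66c}, \eqref{eq66a}, since each builds on the previous.

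First I would establish \eqref{eq66b}. On $[0,T]$ the function $S^T$ solves \eqref{S1a}, which does \emph{not} depend on $T$; hence $S^T(t)$ for $t\le T$ is a fixed function of $t$ alone, call it $\bar S(t)$ (the uncontrolled trajectory). Therefore $\widehat{S(T)} = \frac{\partial}{\partial T}\bar S(T) = \dot{\bar S}(T) = -\beta \bar S(T)(c_0 - \bar S(T) + \frac{\gamma}{\beta}\ln \bar S(T)) = -\beta S^T(T) I^T(T)$, using \eqref{m1440} on $[0,T]$ to identify the parenthesis with $I^T(T)$. This is the short step.

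Next, for \eqref{eq66c}, I would write the variational equation. Fix $t\in(T,T+D)$ and differentiate \eqref{S1b} with respect to $T$. Writing $f(t,T) := \widehat{S(t)}$, one obtains a linear non-autonomous ODE in $t$ of the form $\frac{d}{dt}\widehat{S(t)} = A(t)\widehat{S(t)} + B(t)$, where $A(t)$ comes from linearizing the right-hand side of \eqref{S1b} in $S$ and $B(t)$ collects the explicit $T$-dependence of the vector field (the terms involving $\ln S(T)$, hence $\widehat{S(T)}$). The initial condition at $t=T$ is obtained from matching: $S^T$ is continuous across the switch, so $\widehat{S(t)}\big|_{t=T^+}$ must be computed from the constraint that $\bar S(T) = S^T(T)$ together with the left-derivative; the clean way is to note $S^T(T) = \bar S(T)$ identically in $T$, giving $\widehat{S(T)} = \dot{\bar S}(T) = -\beta S^T(T)I^T(T)$ as the initial datum (consistent with \eqref{eq66b}). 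I then solve this linear ODE by the integrating-factor method: the homogeneous solution is proportional to $S^T(t)I^T(t)$ (this should follow because $\dot S^T$ itself satisfies the linearized equation, a standard fact for scalar ODEs — differentiating \eqref{S1b} in $t$), and variation of parameters produces the integral $\int_T^t \frac{ds}{I^T(s)}$. Careful bookkeeping of the constants $\alpha$, $\beta$, $\gamma$ yields \eqref{eq66c}. I expect this integrating-factor computation to be the main obstacle: one must correctly identify the homogeneous solution, correctly assemble $B(t)$ from the $\ln S(T)$ term, and verify that the source term simplifies so that the particular solution has exactly the stated form. Using \eqref{m1440} repeatedly to convert expressions like $c_0 - S + \frac{\gamma}{\beta}\ln S$ into $I$ will be essential to get the compact form.

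Finally, \eqref{eq66a} follows from \eqref{eq66c} by the chain rule at the endpoint: $\widehat{S(T+D)} = \frac{\partial}{\partial T}[S^T(T+D)] = \dot S^T(T+D) + \widehat{S(t)}\big|_{t=T+D}$, where the first term is the time-derivative from \eqref{S1b} evaluated at $t = T+D$ (equal to $-\alpha\beta S^T(T+D)I^T(T+D)$ after using \eqref{m1440}) and the second is \eqref{eq66c} evaluated at $t=T+D$ (equal to $(\alpha-1)\beta S^T(T+D)I^T(T+D)(1 + \gamma I^T(T)\int_T^{T+D}\frac{ds}{I^T(s)})$). Adding these and combining the coefficients of $\beta S^T(T+D)I^T(T+D)$: the $-\alpha$ from the first term plus the $(\alpha-1)$ from the second gives $-1$, while the remaining piece is $(\alpha-1)\gamma I^T(T)\int_T^{T+D}\frac{ds}{I^T(s)}$, exactly \eqref{eq66a}. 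The only care needed here is the sign convention and checking that no boundary correction term is missing from the Leibniz rule, which it is not since $S^T$ and its $t$-derivative are continuous at $t = T+D$.
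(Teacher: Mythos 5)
Your route is genuinely different from the paper's (which differentiates the implicit separated-variables identities $\int_{S_0}^{S^T(T)}\cdots = -T$ and $\int_{S^T(T)}^{S^T(T+D)}\cdots = -\alpha\beta D$ with respect to $T$ and then changes variables $v=S(s)$), and most of it is sound: your argument for \eqref{eq66b} is correct and simpler than the paper's, the identification of $S^T(t)I^T(t)$ as the homogeneous solution of the variational equation is right, and the chain-rule decomposition $\widehat{S(T+D)}=\dot S^T(T+D)+\partial_T S^T(t)\big|_{t=T+D}$ for \eqref{eq66a} is exactly the right bookkeeping. But there is one concrete error, precisely at the point you flagged as the matching condition. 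The datum for the variational ODE on $(T,T+D)$ must be the one-sided limit of the \emph{partial} derivative, $\lim_{t\to T^+}\partial_T S^T(t)$, not the total derivative $\widehat{S(T)}=\frac{d}{dT}[S^T(T)]$. These differ by the jump of $\dot S^T$ across the switch: since $S^T(T)=\bar S(T)$ identically,
\begin{equation*}
-\beta S(T)I(T)=\frac{d}{dT}\bigl[S^T(T)\bigr]=\partial_T S^T(T^+)+\dot S^T(T^+)=\partial_T S^T(T^+)-\alpha\beta S(T)I(T),
\end{equation*}
so the correct datum is $\partial_T S^T(T^+)=(\alpha-1)\beta S(T)I(T)$. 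This is consistent with \eqref{eq66c}, whose right-hand side tends to $(\alpha-1)\beta S(T)I(T)$ as $t\to T^+$, not to $-\beta S(T)I(T)$.

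The error propagates: with your datum $-\beta S(T)I(T)$, variation of parameters (with source $B(t)=(\alpha-1)\beta\gamma I^T(T)S^T(t)$ coming from the $\ln S^T(T)$ term) yields $\beta S(t)I(t)\bigl(-1+(\alpha-1)\gamma I(T)\int_T^t ds/I(s)\bigr)$, which is \eqref{eq66c} shifted by $\dot S^T(t)=-\alpha\beta S(t)I(t)$ — i.e.\ you would be propagating the total derivative rather than the partial one. That expression happens to coincide with the right-hand side of \eqref{eq66a} at $t=T+D$ (because adding $\dot S^T$ to the partial derivative is exactly what the total derivative at the moving endpoint requires), but it contradicts \eqref{eq66c}, and if you then also add $\dot S^T(T+D)$ in your final step as planned you would obtain $\beta SI(-1-\alpha+\cdots)$ instead of \eqref{eq66a}. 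Replacing the datum by $(\alpha-1)\beta S(T)I(T)$ fixes both: the integrating-factor computation then gives \eqref{eq66c} exactly, and your $-\alpha+(\alpha-1)=-1$ cancellation delivers \eqref{eq66a}.
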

Thanks to identities \eqref{eq66a}-\eqref{eq66b}, one may compute
\begin{eqnarray*}
j'(T) 
 & = &
 \frac{\gamma}{\beta}\left(\frac{1}{\alpha}-1\right)\left(\frac{\widehat{S(T+D)}}{S(T+D)}-\frac{\widehat{S(T)}}{S(T)}\right)\\
 & = &
 \gamma\left(\frac{1}{\alpha}-1\right)\\&&\hspace*{1cm}\times\left(I(T+D)\left(-1+(\alpha-1)\gamma I(T)\int_{T}^{T+D}\frac{ds}{I(s)}\right)+I(T)\right) \\
 & = &
 \gamma\left(\frac{1}{\alpha}-1\right)I(T)\\&&\hspace*{2cm}\times\left(-\frac{I(T+D)}{I(T)}+(\alpha-1)\gamma \int_{T}^{T+D}\frac{I(T+D)}{I(s)}ds+1\right)  .
\end{eqnarray*}
We deduce that $j'(T)=0$ is equivalent to 
\begin{equation}\label{eq:cond psi}
\psi(T)=0,
\end{equation}
for the function $\psi$ defined in \eqref{eq:psi new}.

\paragraph{Step 2: Zeros of $j'$ and uniqueness of the optimal time $T^*$}
 Integrating the second equation in \eqref{SIR} for $u=u_{T,T+D}$, one has for any  $t\in (T,T+D)$,
  $I(t) = I(T+D) \exp\left(\int_{T+D}^t (\alpha\beta S(s)-\gamma)\,ds\right)$.
  Then, using the expression of $\psi$ in \eqref{eq:psi new},  it follows that
\begin{eqnarray*}
\psi(T)
& = &
- \exp\left(\int_{T}^{T+D} (\alpha\beta S(t)-\gamma)\,dt\right)\\
& &
 +(\alpha-1)\gamma \int_{T}^{T+D} \exp\left(\int^{T+D}_s (\alpha\beta S(t)-\gamma)\,dt\right)\,ds   +1.
\end{eqnarray*}
Introducing $\varphi(t):=\exp\left(\int_t^{T+D} (\alpha\beta S(s)-\gamma)\,ds\right)$ for $t\in [0,T+D]$,  the last expression writes simply
 \begin{multline*}
 \psi(T)
 = -\varphi(T)
  + (\alpha-1)\gamma \int_{T}^{T+D} \varphi(t)\,dt   +1\\
  = -\varphi(T)
  + (\alpha-1)\gamma \int_{0}^{D} \varphi(T+t)\,dt   +1  .
  \end{multline*}
  Differentiating the expressions in $\varphi$ with respect to $t$  and afterwards $\psi$  with respect to $T$ yields first
$$
\varphi'(t)
= \alpha\beta \left(S(T+D)-S(t)+\int_{t}^{T+D}  \widehat{S(s)}\,ds\right) \varphi(t),\qquad t\geq 0,
$$
and then
\begin{multline*}
\psi'(T) =
 - \alpha\beta\left(S(T+D)- S(T) +  \int_{T}^{T+D}  \widehat{S(s)}\,ds \right) \varphi(T)\\
    + (\alpha-1) \gamma\alpha\beta\int_{0}^{D} \left(S(T+D)-S(T+t)+\int_{T+t}^{T+D}  \widehat{S(s)}\,ds\right) \varphi(T+t)\, dt.
  \end{multline*}
On the one hand, $S$ decreases along the trajectory, so $S(T+D)-S(T+t) < 0$ for any $t\in [0,D)$.
On the other hand, $\widehat{S(t)} <0$, see formula \eqref{eq66c}.
One then deduces that both terms in the addition in the previous formula are positive.
The function $\psi$ is thus increasing on $(0,\infty)$.

\paragraph{Step 3: The case $\alpha>0$}

Assume now that $\alpha>0$.
Then, for any $T$ large enough in such a way that $S^T(T) < S_\herd$, one has
 $\alpha\beta S^T(t)-\gamma<\alpha\beta S_\herd-\gamma=\gamma(\alpha-1)$ for any $t\in(T,T+D)$,
 because the function $S^T$ is decreasing along every trajectory.
 For such a sufficiently large $T$, one has
\begin{equation}
\label{eq70}
\hspace{-.2cm}
 \varphi(t)
 = \exp\left(\int_t^{T+D} (\alpha\beta S(s)-\gamma)\,ds\right)
 < e^{\gamma(\alpha-1)(T+D-t)},
 \quad t\in (T,T+D],
\end{equation}
and thus
\begin{eqnarray*}
\psi(T)
& = &
-\varphi(T)
  + (\alpha-1)\gamma \int_{0}^{D} \varphi(T+t)\,dt   +1\\
 & > &
 -e^{\gamma(\alpha-1) D}+(\alpha-1)\gamma\int_0^{D}e^{(D-t)\gamma(\alpha-1)}dt+1=0.
\end{eqnarray*}
Therefore, the function $\psi$ being increasing, if $\psi(0)>0$, then \eqref{eq:cond psi} has no solution.
Equivalently there is no $T$ such that $j'(T)=0$, and thus $T^*=0$.
Conversely,  if  $\psi(0)\leq0$, then \eqref{eq:cond psi} admits a unique solution $T^*$, which is the unique critical point of $j$.
In the particular case where $\psi(0)=0$, one has $T^*=0$.
 
 \begin{remark}\label{rmk1}
 Notice that the function $j$ is decreasing on $(0,T^*)$ and increasing on $(T^*,\infty)$.
 This observation will be useful for the numerical implementation.
 \end{remark}

The fact that $S(T^*)> S_\herd$ if $T^*>0$ comes as a byproduct of the previous considerations.
Indeed, it has been shown that $\psi(T)>0$ if $S^T(T) < S_\herd$.
Therefore, if $T^*>0$, then $\psi(T^*)=0$ and $S(T^*)\geqslant S_\herd$.
Noticing that the inequality in \eqref{eq70} is strict for any $t\in (T,T+D)$ yields the strict inequality $S(T^*)> S_\herd$.

\paragraph{Step 4: The case $\alpha=0$}
In the case $\alpha=0$, the solution $S^T$ corresponding to $u_{T,T+D}$ is constant on $(T,T+D)$, and we deduce that 
\begin{align*}
j(T)&=I(T+D)+S(T)-\frac{\gamma}{\beta}\ln(S(T))=(e^{-\gamma D}-1)I(T)+c_0.
\end{align*}
We conclude using the fact that $I(T)$ is maximal when $S(T)=S_\herd$, therefore $T^*$ is such that $S(T^*)=S_\herd$.

%

To terminate the work done in Section \ref{se23}, it now remains to prove Lemma~\ref{lem:HatS2}.

\begin{proof}[Proof of Lemma~\ref{lem:HatS2}]
Using the notation $S^{T}$ previously defined, one has (see \eqref{S1b}) on $(T,T+D)$
\begin{equation*}
\dot S^{T}=-\alpha\beta  S^{T}\left(c_0+\frac{\gamma}{\beta }\left(1-\frac{1}{\alpha} \right)\ln (S^{T}(T))-S^{T}+\frac{\gamma}{\alpha\beta } \ln S^{T}\right),
\end{equation*}
and at time $T$, $S^{T}(T)$ is defined thanks to \eqref{S1a} by 
\begin{equation}\label{m008bis}
\int_{S_0}^{S^{T}(T)}\frac{dv}{\beta  v(c_0-v+\frac{\gamma}{\beta }\ln v)}=-T.
\end{equation}
By differentiating \eqref{m008bis} with respect to $T$, one infers
\[
\widehat{S^{T}(T)}=-\beta S^{T}(T)\left(c_0-S^{T}(T)+ \frac{\gamma}{\beta } \ln (S^{T}(T))\right)=-\beta S^{T}(T)I^{T}(T),
\]
that is \eqref{eq66b}.

Furthermore,  using \eqref{S1b}, one has
\begin{equation*}
\int_{S^{T}(T)}^{S^{T}(T+D)}\frac{dv}{v(c_0+\frac{\gamma}{\beta }\left(1-\frac{1}{\alpha} \right)\ln (S^{T}(T))-v+\frac{\gamma}{\alpha\beta }\ln v)}=-\alpha\beta D.
\end{equation*}
Differentiating this relation with respect to $T$ yields 
\begin{multline*}
\frac{\widehat{S^{T}(T+D)}}{S^{T}(T+D)(c_0+\frac{\gamma}{\beta }\left(1-\frac{1}{\alpha} \right)\ln S^{T}(T)-S^{T}(T+D)+\frac{\gamma}{\alpha\beta }\ln S^{T}(T+D))}\\
-\frac{\widehat{S^{T}(T)}}{S^{T}(T)(c_0+\frac{\gamma}{\beta }\ln S^{T}(T)-S^{T}(T))}
-\frac{\gamma}{\beta } \left(1-\frac{1}{\alpha}\right)\frac{\widehat{S^{T}(T)}}{S^{T}(T)}\\
\times\int_{S^{T}(T)}^{S^{T}(T+D)}\frac{dv}{v(c_0+\frac{\gamma}{\beta }\left(1-\frac{1}{\alpha} \right)\ln S^{T}(T)-v+\frac{\gamma}{\alpha\beta }\ln v)^2}=0.
\end{multline*}

Let us simplify this latter identity.
Observe first that, because of \eqref{m1440}, one has 
\begin{align*}
  c_0+\frac{\gamma}{\beta } \left(1-\frac{1}{\alpha}\right)\ln S^{T}(T)
  & = I^{T}(t)+S^{T}(t)-\frac{\gamma}{\alpha\beta } \ln S^{T}(t)
\end{align*}
for each $t\in(T,T+D)$. By using at the same time the change of variable $v=S(t)$ and the expression of $I^T(t)$, $t\in(T,T+D)$, extracted from this identity,
we infer that
\begin{eqnarray*}
\lefteqn{\int_{S^{T}(T)}^{S^{T}(T+D)} \frac{dv}{\beta v(c_0+\frac{\gamma}{\beta }\left(1-\frac{1}{\alpha} \right)\ln S^{T}(T)-v+\frac{\gamma}{\alpha\beta }\ln v)^2}}\\
& = &
\int_{T}^{T+D} \frac{1}{\beta S^{T}(s)(I^{T}(s))^2} \dot{S}^{T}(s) \,ds
= \int_{T}^{T+D} \frac{-\alpha S^{T}(s)I^{T}(s)}{S^{T}(s)(I^{T}(s))^2}\,ds\\
& = &
-\alpha \int_{T}^{T+D} \frac{ds}{I^{T}(s)}. 
\end{eqnarray*}
Combining all these facts leads to
  \begin{align*}
 0 = \frac{\widehat{S^{T}(T+D)}}{S^{T}(T+D) I^{T}(T+D)} + \beta + \gamma (1-\alpha)\beta I^{T}(T) \int_{T}^{T+D} \frac{ds}{I^{T}(s)},
  \end{align*}
and we arrive at \eqref{eq66a}.

Similar arguments allow for the computation of $\widehat{S(t)}$.
This achieves the proof of Lemma~\ref{lem:HatS2}.
\end{proof}


\subsection{Limit behaviour of $T^*$ when $I_0$ vanishes}
\label{se24}

To complete the demonstration of Theorem~\ref{th:T}, it now remains to prove the last property of the statement.
The following result is instrumental for this purpose.

\begin{lemma}
\label{le18}
Assume $S_0\in (S_\herd,1)$.
For any $\bar T\geq 0$, there exist $c>0$ and $\bar I_0>0$ such that
\begin{equation}
\label{eq887}
\forall I_0\in (0,\bar I_0),\quad \max\limits_{T\in [0,\bar T]} \psi(T) < -c <0,
\end{equation}
where $\psi$ defined in \eqref{eq:psi new} depends upon $I_0$ through the initial value of $(S^T,I^T)$.
\end{lemma}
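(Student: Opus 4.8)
The goal is to show that, uniformly over the compact interval $[0,\bar T]$, the function $\psi$ (which depends on $I_0$ through the initial data of $(S^T,I^T)$) is bounded above by a strictly negative constant once $I_0$ is small enough. The natural idea is a limiting argument: as $I_0\searrow 0^+$, the SIR dynamics with $S_0\in(S_\herd,1)$ fixed should, on the finite time window $[0,T+D]$ with $T\le\bar T$, converge (in a suitable sense) to the degenerate trajectory in which the epidemic has not yet taken off — i.e.\ $S^T(t)$ stays close to its no-epidemic value and $I^T$ stays small but its \emph{ratios} $I^T(T+D)/I^T(T)$ and $I^T(T+D)/I^T(t)$ converge to explicit exponential expressions. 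The plan is to first make this ratio-convergence precise, then read off the limit of $\psi$, and finally upgrade pointwise convergence to uniformity on $[0,\bar T]$ via equicontinuity.

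\textbf{Step 1: Reduce $\psi$ to ratios of $I$, via the integrated equation for $I$.} As already used in Step 2 of Section~\ref{se23}, for $u=u_{T,T+D}$ one has $I^T(t)=I^T(T+D)\exp\!\big(\int_{T+D}^t(\alpha\beta S^T(s)-\gamma)\,ds\big)$ on $[T,T+D]$, so that
\[
\frac{I^T(T+D)}{I^T(T)}=\exp\!\Big(\int_{T}^{T+D}(\alpha\beta S^T(s)-\gamma)\,ds\Big),\qquad
\frac{I^T(T+D)}{I^T(t)}=\exp\!\Big(\int_{t}^{T+D}(\alpha\beta S^T(s)-\gamma)\,ds\Big).
\]
Thus $\psi(T)$ depends on the trajectory \emph{only} through the values $S^T(s)$ for $s\in[T,T+D]$. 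The plan is therefore to control $S^T$ on $[0,\bar T+D]$ as $I_0\to0^+$.

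\textbf{Step 2: Control $S^T$ by the invariant $\Phi$.} Fix $S_0\in(S_\herd,1)$. On $[0,T]$ the control is $\mathbf 1$, and by Lemma~\ref{le1} the quantity $\Phi_{\cR_0}(S^T(t),I^T(t))=I_0+S_0-\frac1{\cR_0}\ln S_0=:c_0(I_0)$ is constant; note $c_0(I_0)\to S_0-\frac1{\cR_0}\ln S_0$ as $I_0\to0$. Since $I^T\ge0$ and $S\mapsto S-\frac1{\cR_0}\ln S$ is decreasing on $(0,S_\herd)$ and increasing on $(S_\herd,1)$, the constraint $S^T(t)+\tfrac{1}{\cR_0}\text{(stuff)}=c_0(I_0)$ together with $S^T$ decreasing pins $S^T(T)$ between $S_0$ and a value tending to $S_0$ as $I_0\to0$; more precisely $S^T(T)\to S_0$ uniformly in $T\in[0,\bar T]$, because $I^T$ on $[0,\bar T]$ is uniformly small (it satisfies $\dot I\le \beta I$, $I(0)=I_0$, so $I^T(t)\le I_0e^{\beta \bar T}$ on $[0,\bar T]$). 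On $[T,T+D]$ one uses the second invariant in \eqref{m1440} with parameter $\cR_0\alpha$ (or, if $\alpha=0$, the explicit constancy of $S^T$): again $I^T$ stays $\le I_0e^{\beta(\bar T+D)}$, hence $S^T(s)\to S_0$ uniformly on $[T,T+D]$, $T\in[0,\bar T]$, as $I_0\to0$. (When $\alpha=0$ this is immediate since $S^T$ is constant on that interval and equals $S^T(T)\to S_0$.)

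\textbf{Step 3: Pass to the limit in $\psi$ and identify the sign.} Plugging $S^T(s)\to S_0$ into the exponential expressions of Step~1 gives, uniformly in $T\in[0,\bar T]$,
\[
\psi(T)\ \longrightarrow\ \psi_0:=-e^{(\alpha\beta S_0-\gamma)D}+(\alpha-1)\gamma\int_0^D e^{(\alpha\beta S_0-\gamma)(D-t)}\,dt+1
\]
as $I_0\to0^+$. It remains to check $\psi_0<0$. Write $a:=\alpha\beta S_0-\gamma$. If $a\neq0$ the integral equals $\frac{1-e^{aD}}{a}$, hence
$\psi_0=-e^{aD}+\frac{(\alpha-1)\gamma}{a}(1-e^{aD})+1=(1-e^{aD})\big(1+\tfrac{(\alpha-1)\gamma}{a}\big)=(1-e^{aD})\cdot\frac{a+(\alpha-1)\gamma}{a}=(1-e^{aD})\cdot\frac{\alpha\beta S_0-\alpha\gamma}{a}=(1-e^{aD})\cdot\frac{\alpha\gamma(\cR_0S_0-1)}{a}$. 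Since $S_0>S_\herd=1/\cR_0$ we have $\cR_0S_0-1>0$; and $(1-e^{aD})/a<0$ for all $a\neq0$ (both factors have opposite sign of $a$), so $\psi_0<0$ when $\alpha>0$. If $\alpha=0$, directly $\psi_0=-e^{-\gamma D}+(-\gamma)\int_0^D e^{-\gamma(D-t)}dt+1=-e^{-\gamma D}-(1-e^{-\gamma D})+1=0$; but in that degenerate case one argues instead from the exact expression $j(T)=(e^{-\gamma D}-1)I^T(T)+c_0$ of Step~4 in Section~\ref{se23}: actually for $\alpha=0$ one should revisit — here $\psi_0=0$ only marginally fails, so for $\alpha=0$ we keep the inequality strict by retaining the $O(I_0)$ correction, which is negative because $S^T(T)<S_0<1$ forces $\alpha\beta S^T(s)-\gamma=-\gamma$ and the sub-exponential estimate is strict; alternatively, for $\alpha=0$ the statement of Theorem~\ref{th:T} identifies $T^*$ by $S(T^*)=S_\herd$ and the needed limit $T^*\to+\infty$ follows directly from $S_0>S_\herd$ and $S^T(T)\to S_0$, so Lemma~\ref{le18} is only invoked for $\alpha>0$. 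In either case set $c:=-\psi_0/2>0$ (for $\alpha>0$).

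\textbf{Step 4: Uniformity.} Since the convergence $S^T(s)\to S_0$ from Step~2 is uniform over $s\in[T,T+D]$ and $T\in[0,\bar T]$, and the map sending $(S^T(\cdot)|_{[T,T+D]})$ to $\psi(T)$ is continuous for the uniform norm (it is built from $\exp$ of bounded integrals), the convergence $\psi(T)\to\psi_0$ is uniform in $T\in[0,\bar T]$. Hence there is $\bar I_0>0$ with $\sup_{T\in[0,\bar T]}|\psi(T)-\psi_0|<c$ for all $I_0\in(0,\bar I_0)$, giving $\max_{T\in[0,\bar T]}\psi(T)<\psi_0+c=-c<0$, which is \eqref{eq887}.

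\textbf{Main obstacle.} The only delicate point is making Step~2 genuinely uniform: one must show $S^T(s)\to S_0$ uniformly not just in $s$ but jointly in $T\in[0,\bar T]$. This hinges on the a priori bound $I^T\le I_0e^{\beta(\bar T+D)}$ on the whole window $[0,T+D]$ — valid since $\dot I\le\beta I$ regardless of the control — combined with the invariants of Lemma~\ref{le1} to convert smallness of $I^T$ into closeness of $S^T$ to $S_0$ via the strict monotonicity of $S\mapsto S-\tfrac1{\cR}\ln S$ away from its minimum. The case $\alpha=0$ must be flagged separately, as there $\psi_0=0$ exactly and Lemma~\ref{le18} is used only for $\alpha>0$ in the proof of the final limit; for $\alpha=0$ the conclusion $T^*\to+\infty$ comes directly from $S(T^*)=S_\herd<S_0$ and $S^T(T)\to S_0$.
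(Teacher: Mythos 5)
Your proof is correct in substance and follows essentially the same route as the paper: an a priori bound on $I^T$ over $[0,T+D]$, the deduction that $S^T(s)=S_0+o(1)$ uniformly as $I_0\searrow 0^+$, passage to the limit in $\psi$ to obtain an explicit $\psi_0$, and a sign check on $\psi_0$. Three remarks. First, Step 3 contains two compensating slips: $\int_0^D e^{a(D-t)}\,dt=\frac{e^{aD}-1}{a}$, not $\frac{1-e^{aD}}{a}$, and $a+(\alpha-1)\gamma=\alpha\beta S_0+\alpha\gamma-2\gamma$, not $\alpha\beta S_0-\alpha\gamma$; the correct computation gives $\psi_0=(e^{aD}-1)\frac{(\alpha-1)\gamma-a}{a}=(1-e^{aD})\frac{\alpha\gamma(\cR_0 S_0-1)}{a}$, which happens to coincide with your final expression, so the sign analysis and conclusion stand, but the intermediate lines should be repaired. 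Second, the case $a=\alpha\beta S_0-\gamma=0$ (which occurs for $\alpha=S_\herd/S_0\in(0,1)$) is excluded by your division by $a$ and must be treated separately; it gives $\psi_0=(\alpha-1)\gamma D<0$, consistent with the limit of your formula, and the paper states it as a separate case. Third, your observation about $\alpha=0$ is correct and in fact sharper than the paper's own argument: when $\alpha=0$ one has $I^T(t)=I^T(T)e^{-\gamma(t-T)}$ exactly on $[T,T+D]$, hence $\psi\equiv 0$, so the lemma as stated fails there and must be read as restricted to $\alpha>0$; the limit $T^*\to+\infty$ for $\alpha=0$ then follows, as you note, directly from the characterization $S(T^*)=S_\herd$ together with $S^T(T)\to S_0>S_\herd$.
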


Using the characterization (already demonstrated above) of $T^*$ given in Theorem \ref{th:T}, one deduces straightforwardly from Lemma \ref{le18} that
\[
\lim\limits_{I_0\searrow 0^+} T^* = +\infty.
\]

\begin{proof}[Proof of Lemma~\ref{le18}]
Let $T\geq 0$.
From the fact that $\dot I^T \leq (\beta S_0-\gamma) I^T$, $I^T(0)=I_0$, one deduces that
$0 \leq I^T(t) \leq I_0 e^{(\beta S_0 - \gamma) t}$, $t\in [0,T+D]$.

From this, one deduces that $|\dot S^T| = \beta u(t) S^TI^T \leq \eta(I_0) S^T$, where $\eta(I_0)$ represents, here and in the sequel, quantities that converge to 0 when $I_0$ vanishes, {\em uniformly on $[0,T+D]$ when they depend upon time $t$}.
Therefore,
$$
S^T(t) = S_0 + \eta(I_0),\qquad t\in [0,T+D].
$$

Define now $\omega:= \alpha\beta S_0 - \gamma$.
From the foregoing, one has
\begin{equation}
\label{eq888}
\begin{cases}
\dot I^T = (\beta S_0 - \gamma) I^T + I_0 \eta(I_0), & t \in [0,T],\\
\dot I^T = \omega I^T + I_0 \eta(I_0), & t \in [T,T+D].
\end{cases}
\end{equation}
By integration one deduces from \eqref{eq888} that $I^T(t) = I_0 e^{(\beta S_0 - \gamma) t} + I_0 \eta(I_0)$ for any $t \in [0,T]$, and in particular that $I^T(T) = I_0 (e^{(\beta S_0 - \gamma) T} + \eta(I_0))$.

Assume first $\omega\neq 0$, then integration of the second formula in \eqref{eq888} yields
\begin{eqnarray}
I^T(t)
& = &
\nonumber
I^T(T) e^{\omega (t-T)} + I_0 \eta(I_0)\\
& = &
\label{eq890}
I_0
\left(
e^{(\beta S_0 - \gamma) T} e^{\omega (t-T)} + \eta(I_0)
\right),\qquad t \in [T,T+D].
\end{eqnarray}
Using \eqref{eq890} to compute the value $\psi(T)$ in \eqref{eq:psi new} then shows that
\begin{eqnarray}
\psi(T)
& = &
\nonumber
-e^{\omega D} + (\alpha-1)\gamma \frac{e^{\omega D} - 1}{\omega} + 1 + \eta(I_0)\\
& = &
\label{eq889}
\left(
\frac{(\alpha-1)\gamma}{\omega} -1 
\right) \left(
e^{\omega D} - 1
\right) + \eta(I_0). 
\end{eqnarray}
If $\omega >0$, then due to the fact that $\alpha-1$ is negative, the first factor of the product is negative, while the second one is positive.
If $\omega <0$, then the second factor is negative, while the first one is positive, because
$$
(\alpha-1)\gamma-\omega
= (\alpha-1)\gamma - \alpha\beta S_0+\gamma
= \alpha \beta (S_\herd-S_0) < 0,
$$
with $S_\herd$ defined in \eqref{eq63}.
In any case, the zero-order term in \eqref{eq889} is negative when $\omega\neq 0$.

The case $\omega=0$ is similar, with \eqref{eq889} replaced by
\begin{equation}
\psi(T)
\label{eq892}
= -1 + (\alpha-1)\gamma D + 1 + \eta(I_0)
= (\alpha-1)\gamma D + \eta(I_0).
\end{equation}
As the higher-order terms $\eta(I_0)$ in \eqref{eq889} and \eqref{eq892} vanish when $I_0$ goes to 0 {\em uniformly on any compact of $[0,+\infty)$}, this demonstrates \eqref{eq887}.
\end{proof}

\section{Conclusion}
\label{se5}

Voluntarily ignoring many features important in the effective handling of a human epidemic (unmodeled sources of heterogeneity in the spread of the disease, limited hospital capacity, imprecise epidemiological data, partial respect of the enforcement measures\dots), we investigated here the effects of social distancing on a simple SIR model.
In this simplified setting, we have shown that it is possible to exactly answer the following question: given maximal social distancing intensity and duration (but without prescribed starting date), how can one minimize the epidemic final size, that is the total number of individuals infected during the outbreak?
Our contribution is threefold: we have proved the existence of a unique optimal policy, shown some of its key properties, and demonstrated how to determine it numerically by an easily tractable algorithm.
As an outcome, this provides the best possible policy,
in the worst case where no vaccine or therapy exists.
Numerical computations have been provided that exemplify the theoretical results and 
allowed to tabulate the maximal gain attainable in terms of cumulative number of infected during the outbreak, under various experimental conditions.

It is somewhat intuitive that the best policy achievable by imposing a lockdown of possibly time-varying, but limited, intensity on a time interval of which only the duration is restricted, is reached by enforcing the strictest distancing during the whole time interval.
However, up to our knowledge this had not been proved or conjectured so far.
Moreover,
our results show that the onset of the lockdown is uniquely determined as the unique solution of a numerically tractable equation.

The fact that the optimal control does not begin from the earliest possible time is only an apparent paradox.
As a matter of fact, epidemics behave somehow as wildfires ---the propellant being the susceptible individuals.
On the one hand, attempting to contain the spread too early is pointless, as essentially the same amount of propellant will be present after the end of the intervention, leading ultimately to the same epidemic final size.
On the other hand, acting too late is also useless, as in this case most of the stock of propellant will have been already consumed at the time of the intervention.
The best time to proceed lies in between, somewhere around the peak of the epidemic when the herd immunity threshold is crossed ---typically some weeks after the beginning of the epidemics---,  with larger or more intense intervention inducing larger mitigation effect.
The results provided allow to determine precisely what is the best time to initiate social distancing.

As a last remark, notice that in the simplified setting considered here, limited hospital capacity or deaths caused by supplementary mortality are ignored.
Among other extensions, we plan to address in the future the issue of minimization of the epidemic final size under adequate constraints.

\section*{Acknowledgments}

The authors express their warm appreciation to Prof.\ Yannick Privat (IRMA, Universit\'e de Strasbourg, France) and Prof.\ Nicolas Vauchelet (LAGA, Universit\'e Sorbonne Paris Nord, France), for valuable discussions and comments during the elaboration of this article.


\bibliographystyle{abbrv}
\bibliography{biblio-JTB}

\begin{thebibliography}{10}

\bibitem{Alvarez:2020aa}
F.~E. Alvarez, D.~Argente, and F.~Lippi.
\newblock A simple planning problem for covid-19 lockdown.
\newblock Technical report, National Bureau of Economic Research, 2020.

\bibitem{Andreasen:2011aa}
V.~Andreasen.
\newblock The final size of an epidemic and its relation to the basic
  reproduction number.
\newblock {\em Bulletin of Mathematical Biology}, 73(10):2305--2321, 2011.

\bibitem{Behncke:2000aa}
H.~Behncke.
\newblock Optimal control of deterministic epidemics.
\newblock {\em Optimal Control Applications and Methods}, 21(6):269--285, 2000.

\bibitem{Bliman:2020aa}
P.-A. Bliman, M.~Duprez, Y.~Privat, and N.~Vauchelet.
\newblock Optimal immunity control by social distancing for the {SIR} epidemic
  model, 2020.

\bibitem{Buonomo:2019ab}
B.~Buonomo, R.~Della~Marca, and A.~d'Onofrio.
\newblock Optimal public health intervention in a behavioural vaccination
  model: the interplay between seasonality, behaviour and latency period.
\newblock {\em Mathematical Medicine and Biology: A Journal of the IMA},
  36(3):297--324, 2019.

\bibitem{Buonomo:2019aa}
B.~Buonomo, P.~Manfredi, and A.~d'Onofrio.
\newblock Optimal time-profiles of public health intervention to shape
  voluntary vaccination for childhood diseases.
\newblock {\em Journal of Mathematical Biology}, 78(4):1089--1113, 2019.

\bibitem{Djidjou-Demasse:2020aa}
R.~Djidjou-Demasse, Y.~Michalakis, M.~Choisy, M.~T. Sofonea, and S.~Alizon.
\newblock Optimal covid-19 epidemic control until vaccine deployment.
\newblock {\em medRxiv}, 2020.

\bibitem{Katriel:2012aa}
G.~Katriel.
\newblock The size of epidemics in populations with heterogeneous
  susceptibility.
\newblock {\em Journal of Mathematical Biology}, 65(2):237--262, 2012.

\bibitem{Kermack:1927aa}
W.~O. Kermack and A.~G. McKendrick.
\newblock Contributions to the mathematical theory of epidemics---{I}.
\newblock {\em Proceedings of the Royal Society}, 115A:700--721, 1927.

\bibitem{Ketcheson:2020aa}
D.~I. Ketcheson.
\newblock Optimal control of an {SIR} epidemic through finite-time
  non-pharmaceutical intervention, 2020.

\bibitem{Lee:2010aa}
S.~Lee, G.~Chowell, and C.~Castillo-Ch{\'a}vez.
\newblock Optimal control for pandemic influenza: the role of limited antiviral
  treatment and isolation.
\newblock {\em Journal of Theoretical Biology}, 265(2):136--150, 2010.

\bibitem{Lenhart:2007aa}
S.~Lenhart and J.~T. Workman.
\newblock {\em Optimal control applied to biological models}.
\newblock CRC press, 2007.

\bibitem{Lin:2010aa}
F.~Lin, K.~Muthuraman, and M.~Lawley.
\newblock An optimal control theory approach to non-pharmaceutical
  interventions.
\newblock {\em BMC infectious diseases}, 10(1):32, 2010.

\bibitem{Ma:2006aa}
J.~Ma and D.~J. Earn.
\newblock Generality of the final size formula for an epidemic of a newly
  invading infectious disease.
\newblock {\em Bulletin of Mathematical Biology}, 68(3):679--702, 2006.

\bibitem{Manfredi:2013aa}
P.~Manfredi and A.~D'Onofrio.
\newblock {\em Modeling the interplay between human behavior and the spread of
  infectious diseases}.
\newblock Springer Science \& Business Media, 2013.

\bibitem{Miclo:2020aa}
L.~Miclo, D.~Spiro, and J.~Weibull.
\newblock Optimal epidemic suppression under an {ICU} constraint, 2020.

\bibitem{Miller:2012aa}
J.~C. Miller.
\newblock A note on the derivation of epidemic final sizes.
\newblock {\em Bulletin of Mathematical Biology}, 74(9):2125--2141, 2012.

\bibitem{Morris:2020aa}
D.~H. Morris, F.~W. Rossine, J.~B. Plotkin, and S.~A. Levin.
\newblock Optimal, near-optimal, and robust epidemic control, 2020.

\bibitem{Salje:2020aa}
H.~Salje, C.~T. Kiem, N.~Lefrancq, N.~Courtejoie, P.~Bosetti, J.~Paireau,
  A.~Andronico, N.~Hoz{\'e}, J.~Richet, C.-L. Dubost, et~al.
\newblock Estimating the burden of {SARS-CoV-2 in France}.
\newblock {\em Science}, May 2020.

\bibitem{Sharomi:2017aa}
O.~Sharomi and T.~Malik.
\newblock Optimal control in epidemiology.
\newblock {\em Annals of Operations Research}, 251(1-2):55--71, 2017.

\bibitem{Worldometer:aa}
Worldometer.
\newblock Coronavirus {Cases in France}.

\bibitem{Yan:2008aa}
X.~Yan and Y.~Zou.
\newblock Optimal and sub-optimal quarantine and isolation control in {SARS}
  epidemics.
\newblock {\em Mathematical and Computer Modelling}, 47(1-2):235--245, 2008.

\end{thebibliography}

\appendix

\section{Implementation issues}
\label{se6}

Two algorithms were implemented and compared to compute numerically the optimal solution of Problem \eqref{OCP2}, showing as expected identical solutions and validating the theoretical derivations.
Algo.\ \ref{algo:1} uses the fact that \eqref{OCP2} is equivalent to minimizing the function $j$ given in \eqref{eq:opt j}, which is decreasing and then increasing\footnote{This property has been verified graphically by curves not reproduced here.}, see Remark \ref{rmk1}.
The minimum is then calculated by a trisection method.
Algo.\ \ref{algo:2} solves directly equation \eqref{eq:psi=0} of Theorem \ref{th:T} using a bisection method.
In both strategies, all ODE solutions have been computed by a Runge-Kutta fourth-order method.

\begin{algorithm}[H]
\caption{Solving Problem \eqref{OCP2} by trisection method applied to \eqref{eq:opt j}}
\label{algo:1}
\begin{algorithmic}[1]
\REQUIRE{$k\in\mathbb{N}^*$}

\STATE\textbf{Initialization:} $T_{\min,0}=0$ and $T_{\max,0}=S^{-1}(S_{\herd})$ with $(S,I)$ solution to \eqref{SIR} for $u\equiv 1$

\FOR{$i=1,\dots, k$}
\STATE $T_{\text{left}}= T_{\min,i-1}+(T_{\max,i-1}-T_{\min,i-1})/3$
\STATE  $T_{\text{right}}=T_{\min,i-1}+2(T_{\max,i-1}-T_{\min,i-1})/3$
 \STATE Compute $(S_{\text{left}},I_{\text{left}})$, $(S_{\text{right}},I_{\text{right}})$ solutions to \eqref{SIR} 
 for $u_{T_{\text{left}},T_{\text{left}}+D}$, $u_{T_{\text{right}},T_{\text{right}}+D}$
  
  \IF{$j(T_{\text{right}})\geq j(T_{\text{left}})$}
  \STATE $T_{\min,i}= T_{\text{left}}$
  and $T_{\max,i}=T_{\max,i-1}$
  \ELSE\STATE $T_{\min,i}=T_{\min,i-1}$
and  $T_{\max,i}= T_{\text{right}}$
  \ENDIF
\ENDFOR
\RETURN $T_{k}:=(T_{\max,k}+T_{\min,k})/2$, $u_{k}:=u_{T_{k},T_k+D}$
\end{algorithmic}
\end{algorithm}

\begin{algorithm}[H]
\caption{Solving Problem  \eqref{OCP2} by bisection method applied to \eqref{eq:psi=0}}
\label{algo:2}
\begin{algorithmic}[1]
\REQUIRE{$k\in\mathbb{N}^*$}

\STATE\textbf{Initialization:} $T_{\min,0}=0$ and $T_{\max,0}=S^{-1}(S_{\herd})$
 with $(S,I)$ solution to \eqref{SIR} for $u\equiv 1$

\FOR{$i=1,\dots, k$}
  \STATE Let  $T_{\text{test},i}=(T_{\max,i-1}+T_{\min,i-1})/2$
  \IF{$\psi(T_{\text{test},i})\geq 0$}
  \STATE $T_{\max,i}= T_{\text{test},i-1}$
  \ELSE\STATE  $T_{\min,i}= T_{\text{test},i-1}$
  \ENDIF
\ENDFOR
\RETURN $T_{k}:=(T_{\max,k}+T_{\min,k})/2$, $u_{k}:=u_{T_{k},T_k+D}$
\end{algorithmic}
\end{algorithm}

\end{document}